\documentclass[12pt]{article}
\usepackage[utf8]{inputenc}
\usepackage{amsmath, amssymb, amsthm}
\usepackage{enumitem}
\usepackage{geometry}
\usepackage{setspace}
\usepackage{graphicx}
\usepackage{hyperref}
\usepackage{tikz,tikz-cd}
\usepackage{float}
\usepackage{verbatim}

\usepackage{biblatex}
\renewcommand{\b}[1]{\textbf{#1}}

\newcommand{\Z}{\mathbb{Z}}

\newcommand{\C}{\mathbb{C}}

\renewcommand{\sl}{\mathfrak{sl}}
\newcommand{\xto}[1]{\xrightarrow{#1}}
\DeclareMathOperator{\Spec}{Spec}
\DeclareMathOperator{\Lie}{Lie}
\DeclareMathOperator{\rk}{rk}
\DeclareMathOperator{\coker}{coker}

\newtheorem{theorem}{Theorem}[section]

\newtheorem{lemma}[theorem]{Lemma}

\newtheorem{proposition}[theorem]{Proposition}
\newtheorem{conjecture}[theorem]{Conjecture}

\theoremstyle{definition}
\newtheorem{definition}[theorem]{Definition}

\newtheorem{example}[theorem]{Example}

\theoremstyle{remark}
\newtheorem{remark}[theorem]{Remark}

\newcommand{\g}{\mathfrak{g}}
\renewcommand{\P}{\mathcal{P}}
\newcommand{\M}{\mathcal{M}}
\newcommand{\ch}{\mathrm{ch}}
\newcommand{\N}{\mathcal{N}}
\newcommand{\h}{\mathfrak{h}}
\DeclareMathOperator{\Hom}{Hom}

\newcommand{\gl}{\mathfrak{gl}}

\title{A formula for the $q$-character of functions on the nilpotent cone of some Lie algebra representations}
\author{Vasily Krylov and Frank Wang}

\begin{document}

\maketitle

\begin{abstract}
    Let $\g$ be a reductive Lie algebra and $V$ a finite-dimensional $\g$-representation. When $V$ is the representation of a cyclic quiver with equal dimensions, the representation of a cyclic quiver with two vertices, or a representation of a product of copies of $\sl_2$ we call an \emph{acyclic extended quiver representation of trivial type}, we prove a $q$-character formula for the nilpotent cone of $V$ analogous to Hesselink's $q$-character formula of the usual nilpotent cone of $\g$. We also define a new class of representations we call \emph{Hesselink-type representations}, for which we make a conjecture in relation to our formula and describe a geometric interpretation.
\end{abstract}

\section{Introduction}

Let $\g$ be a complex reductive Lie algebra and let $\N$ be its nilpotent cone. Then the ring of functions $\C[\N]$ has a natural grading via the usual degree of polynomials (coming from the degree of polynomials in $\C[\g]$), and one may consider the \emph{$q$-character} of $\C[\N]$, given by

\[\ch_q\C[\N]:=\sum_{d\geq 0}\ch(\C[\N]_d)q^d,\]
where $\C[\N]_d$ is the degree $d$ component of $\C[\N]$ and $\ch(\C[\N]_d)$ is the character of $\C[\N]_d$ as a $\g$-module. A well-known formula due to Hesselink \cite{hesselinkalg} for this $q$-character computes the multiplicity of an irreducible character in terms of a $q$-analog of the dimension of the zero weight space of the corresponding irreducible representation; see also \cite{mcgovern1989}, \cite[Proposition 4.0.1]{MasonBrown2022RegularFO}.

We will prove an analog of this formula for the nilpotent cone of a few classes of representations of Lie algebras. Namely, let $V$ be a finite dimensional representation of $G$, and let $\C[V]^G_+\subset\C[V]$ be the augmentation ideal of positive degree elements of $\C[V]^G$. Then one may define the \textbf{nilpotent cone of $V$} as $\N_V:=\Spec (\C[V]/\C[V]^G_+\C[V])$. Note that the $\C$-points of $\N_V$ are exactly the points $p$ such that the closure of the $G$-orbit of $p$ contains $0$. Then one may similarly define a grading on $\C[\N_V]$ coming from the usual grading on $\C[V]$, so we may consider $q$-characters in this setting as well. 

In this paper we will give a formula for the $q$-character of the nilpotent cone for some examples of representations. Specifically, we will consider the following examples:

\begin{enumerate}
    \item $V$ is the adjoint representation of $\g$.\label{eg:adj}
    \item Let $V_1,\dots,V_l$ be a collection of complex vector spaces of the same dimension $N$, and let $G=\prod_{k\in\Z/l\Z}GL(V_k),V=\bigoplus_{k\in\Z/l\Z}\Hom(V_k,V_{k+1})$. This is the representation of a cyclic quiver of length $l$ where the vector spaces assigned to each vertex have equal dimensions.\label{eg:eqdim}
    \item Let $V_1,V_2$ be complex vector spaces (of possibly different dimensions), and let $G=GL(V_1)\times GL(V_2),V=\Hom(V_1,V_2)\oplus\Hom(V_2,V_1)$. This is the representation of a cyclic quiver of length 2.
    \item Let $G$ be a product of finitely many copies of $SL_2(\C)$, and $V$ be one of the representations described in Section \ref{sec:sl2}.
\end{enumerate}

We will also give some examples of $q$-characters for other small representations as a starting point for creating a unified theory for $q$-characters of nilpotent cones for all representations.

Let us fix some notation. Fix a Cartan subalgebra $\h\subset\g$ and let $T\subset G$ be the torus with $\Lie T=\h$. Let $W$ be the Weyl group of $\g$ and $R$ be the root system of $\g$. Fix a set of positive roots $R_+$ and let $\rho=\frac{1}{2}\sum_{\alpha\in R_+}\alpha$, $\rho^\vee=\frac{1}{2}\sum_{\alpha\in R_+}\alpha^\vee$. Let $\Lambda$ be the weight lattice of $\g$ and let $\Lambda_+$ be the set of dominant weights. For $\lambda\in\Lambda_+$, let $L_\lambda$ be the irreducible representation of $\g$ with highest weight $\lambda$ and let $\chi_\lambda$ be its character. Let $\langle-,-\rangle$ be the natural pairing $\h\otimes\h^*\to\C$. For a graded vector space $Z$ we let $\dim_q Z:=\sum_{d\geq 0}\dim(Z_d)q^d$ be the graded dimension (or Hilbert series) of $Z$. 

Let $S$ be the multiset of the weights in $V$, and let 
\[S_+:=\{\alpha\in S\mid\langle\alpha,\rho^\vee\rangle>0\},\quad S_0:=\{\alpha\in S\mid\langle\alpha,\rho^\vee\rangle=0\},\quad S_-:=\{\alpha\in S\mid\langle\alpha,\rho^\vee\rangle<0\}\]
where each of $S_+,S_0,S_-$ is viewed as submultisets of $S$. Also, let $S_{\leq 0}:=S_-\cup S_0$. When we say a function on one of $S,S_+,S_0$, or $S_{\leq 0}$, we mean a function on all occurrences of weights in the multiset, not just on distinct weights.

Let $(S_0)_0$ be the set of all functions $m\colon S_0\to\Z_{\geq 0}$ such that $m\neq 0$ and $\sum_{\alpha\in S_0}m(\alpha)\alpha=0$. Let us call a function $m:S\to\Z_{\geq 0}$ \textbf{weakly negative} if $m(\alpha)=0$ for all $\alpha\in S_+$ and for all $m'\in (S_0)_0$, there is some $\alpha\in S_0$ such that $m(\alpha)<m'(\alpha)$. For a function defined on a sub(multi)set of $S$, we say it is \textbf{weakly negative} if its extension to $S$ by zero is weakly negative.

Consider the following $q$-analog of the Kostant partition function:
\[\P_q\colon\Lambda\to\Z[q],\]
\[
    \P_q(\mu)=\sum_{n=0}^\infty\#\Big\{m\colon S\to\Z_{\geq 0}\mid\sum_{\alpha\in S}m(\alpha)\alpha=-\mu,\sum_{\alpha\in S}m(\alpha)=n,\text{ $m$ is weakly negative}\Big\}q^n.
\]

A priori, this definition of $\P_q$ only lands in $\Z[[q]]$, but it in fact lands in $\Z[q]$ due to Dickson's lemma.

Note that when $V$ is the adjoint representation, $S_-$ is the set of negative roots $R_-$ and $S_0$ is a multiset of $\rk\,\g$ $0$'s, corresponding to $\h\subset\g$. In particular, $(S_0)_0$ is all nonzero functions $S_0\to\Z_{\geq 0}$, and a weakly negative function is simply a function $R_-\to\Z_{\geq 0}$. In this case $\P_q$ reduces to the $q$-analog of the Kostant partition function introduced by Lusztig \cite{lusztigqkostant}.

\begin{example}\label{eg:1,1}
    Consider the case of a cyclic quiver with two vertices that are both one-dimensional. Let $V_1,V_2$ be complex vector spaces with $\dim V_1=\dim V_2=1$. Then let $\g\subset\sl(V_1\oplus V_2)$ be the Lie algebra that fixes $V_1,V_2$, and let $V=\Hom(V_1,V_2)\oplus\Hom(V_2,V_1)$. Then we have $\g\cong\gl_1$, so we have $\rho^\vee=0$. The weight lattice $\Lambda$ is $\Z\subset\h^*\cong\C$, and $V$ has two weights which we can write as $\pm 1\in\Lambda$. Since $\rho^\vee=0$ we have $S=S_0=\{\pm 1\}$. So a weakly negative function is a function $m\colon\{\pm 1\}\to\Z_{\geq 0}$ such that either $m(1)=0$ or $m(-1)=0$. We thus obtain $\P_q(k)=q^{|k|}$ for any integer $k$. 
\end{example}

Also, define
\[\M_q\colon \Lambda\to\Z[q],\quad \M_q(\mu)=\sum_{w\in W}(-1)^{\ell(w)}\P_q(w(\mu+\rho)-\rho).\]

This is a $q$-analog of the Kostant multiplicity formula for the zero weight space of a representation that also generalizes the corresponding formula in \cite{lusztigqkostant}.

\begin{theorem}\label{thm}
    Let $G,V$ be one of the above cases. Then we have
    \[\ch_q\C[\N_V]=\sum_{\lambda\in\Lambda_+}\M_q(\lambda)\chi_\lambda.\]
\end{theorem}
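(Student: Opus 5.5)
The plan is to follow the geometric proof of Hesselink's formula: realize $\C[\N_V]$ through a resolution (or collapsing) of $\N_V$ by a homogeneous vector bundle over a flag variety, and compute the graded character with Borel--Weil--Bott. In each of the listed cases, first take $V_{<0}\subset V$ to be the sum of the weight spaces for the weights in $S_-$. Then fix a $B$-stable subspace $U\subset V$ containing $V_{<0}$ and form the collapsing map $\pi\colon G\times^B U\to V$.

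\textbf{Step 1 (geometry).} Show that $\pi$ has image $\N_V$ and is birational onto it, or at least that $\N_V$ is normal and $R\pi_*\mathcal{O}=\mathcal{O}_{\N_V}$. For the adjoint case this is Springer's resolution with $U=\mathfrak{n}$. For the cyclic quiver with equal dimensions and for the two-vertex cyclic quiver, nilpotency of a representation (every cycle composite is nilpotent) means it can be conjugated into a ``block strictly upper triangular'' form. For $SL_2^{\times k}$ it should follow from the Hilbert--Mumford criterion: a nilpotent vector is destabilized by a one-parameter subgroup, which can be conjugated into $\rho^\vee$-negative position. The expected output is the cohomology identity
\[
\ch_q\C[\N_V]=\sum_i(-1)^i\ch_q H^i\bigl(G/B,\ \mathrm{Sym}(U^*)\bigr),
\]
with all higher cohomology vanishing. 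The vanishing is proved by a Frobenius splitting argument or a known rationality and normality result: Kempf collapsing in the quiver cases, Hesselink in the adjoint case.

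\textbf{Step 2 (character computation).} Expand $\mathrm{Sym}(U^*)$ $q$-graded by weights; its $T$-character is $\prod_{\alpha\in U}(1-qe^{-\alpha})^{-1}$ with multiplicities. Apply the Euler-characteristic form of Borel--Weil--Bott: $\sum_i(-1)^iH^i(G/B,\mathcal{L}_\nu)$ equals $(-1)^{\ell(w)}\chi_{w\cdot\nu}$ or $0$. Collecting the coefficient of $\chi_\lambda$ then gives $\sum_w(-1)^{\ell(w)}\P'_q(w(\lambda+\rho)-\rho)$, where $\P'_q$ counts all functions on the weights of $U$.

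\textbf{Step 3 (matching with weakly negative functions).} This is the main obstacle. The zero-weight directions and the genuinely invariant directions in $S_0$ are not all inside $U$. In the adjoint case $U=\mathfrak{n}$ avoids $S_0$ entirely. In the quiver and $\mathfrak{sl}_2$-product cases, however, $\N_V$ has parts along $S_0$ (Example~\ref{eg:1,1}: $\N_V=\{xy=0\}$). There no single $U$ works, and $\N_V$ is a union of several components.

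I would handle this in one of two ways. The first option is to use the Koszul-type resolution $\C[V]\otimes\Lambda^\bullet(\text{generators of }\C[V]^G_+)$ together with the complete-intersection property, where known. This reduces the problem to showing
\[
\ch_q\C[V]\cdot\prod_i(1-q^{d_i})=\sum_\lambda\M_q(\lambda)\chi_\lambda,
\]
where the $d_i$ are the degrees of the generators of $\C[V]^G$. The second option is to filter by the torus $T_0$ acting on the $S_0$-part. The condition ``weakly negative'' exactly says that the monomial in the $S_0$ variables is not divisible by a monomial $x^{m'}$ with $m'\in(S_0)_0$. These $x^{m'}$ are the restrictions to $V^{T}$-directions of the invariants, so the monomials not divisible by them form a basis of $\C[V_0]/(\C[V_0]^{T}_+)$.

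In either option the main work is a Gröbner/initial-ideal argument. Choose a term order compatible with $\rho^\vee$, and show that the initial ideal of $\C[V]^G_+\C[V]$ is generated by the $S_+$ variables modulo the Weyl-group alternation, together with the monomials $x^{m'}$. I would verify this case by case: using invariant theory of quivers (the generators are traces of cycles), and for $SL_2^{\times k}$ using the explicit invariants. I would then assemble the resulting graded character via the Weyl character formula to obtain the alternating sum defining $\M_q$.
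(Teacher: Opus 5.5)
The gap is in Step 3. Steps 1--2 can at best recover the adjoint case, since, as you note, no single $U$ works once $\N_V$ has parts along $S_0$. Your Step 3 never supplies the identity that carries the theorem.

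\textbf{Option (1) reduces the theorem but does not prove it.} The reduction itself is equivalent to the paper's. Cofreeness gives $\C[V]\cong\C[\N_V]\otimes\C[V]^G$, so the graded character of $\C[\N_V]$ is already explicit:
\[
\ch_q\C[\N_V]=\prod_i(1-q^{d_i})\prod_{\alpha\in S}(1-qt^{-\alpha})^{-1}.
\]
So no initial-ideal computation for $\C[V]^G_+\C[V]$ can tell you anything new. What remains is a pure character identity turning this product into the alternating sum $\sum_\lambda\M_q(\lambda)\chi_\lambda$.

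To get it, multiply by the Weyl denominator and use the $W$-invariance of $\prod_{\alpha\in S}(1-qt^{-\alpha})^{-1}$. You also need that $\C[V^{\rho^\vee}]$ is \emph{free} over $A=\C[V^{\rho^\vee}]^{\h}$, so that the weakly negative functions on $S_0$ have generating function $\ch_q\C[V^{\rho^\vee}]/\dim_qA$. The statement then becomes
\[
\sum_{w\in W}(-1)^{\ell(w)}t^{w\rho}\prod_{\alpha\in S_+}(1-qt^{-w\alpha})=\frac{\dim_qA}{\dim_q\C[V]^G}\sum_{w\in W}(-1)^{\ell(w)}t^{w\rho},
\]
that is, \eqref{eq:id}. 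This generalizes Macdonald's identity; in the adjoint case the scalar factor is $\sum_{w\in W}q^{\ell(w)}$.

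This identity is the real content of the theorem, and the paper proves it case by case. It expands the product over subsets $I\subseteq S_+$ and bounds the coordinates of $\lambda_I=\rho-\sum_{\alpha\in I}\alpha$, showing each $\lambda_I$ is either singular or in $W\rho$. It then computes the coefficient of $t^\rho$:
\begin{enumerate}
\item for the equal-dimension cyclic quiver this gives $\sum_{w'\in S_N}q^{l\ell(w')}$;
\item for the two-vertex quiver it uses the reduction $d_1\mapsto d_1-2$ plus an induction with a sign-reversing involution;
\item for extended quivers it uses the basic cases plus leaf-attachment and disjoint-union lemmas.
\end{enumerate}
Nothing in your proposal plays this role. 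You also never establish the freeness of $\C[V^{\rho^\vee}]$ over $A$. That freeness genuinely fails for $l$-cycles in $\prod\sl_2$, where syzygy corrections are needed, as in the paper's treatment of the $2$-cycle.

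\textbf{The Gr\"obner statement you propose to verify is false.} Take $\g=\sl_2$ acting adjointly, with coordinates $x_e,x_h,x_f$.
\begin{enumerate}
\item The ideal $\C[V]^G_+\C[V]$ is principal, generated by $x_h^2+x_ex_f$ (that is, $-\det$). So for every term order its initial ideal is $(x_h^2)$ or $(x_ex_f)$, and it contains no variable.
\item In particular it does not contain $x_h$, which is the monomial $x^{m'}$ for the minimal element of $(S_0)_0$. It also does not contain the $S_+$-variable $x_e$.
\end{enumerate}
Nor are the $x^{m'}$ restrictions of $G$-invariants. For the equal-dimension cyclic quiver, $A=\C[p_1,\dots,p_N]$ with $p_i=\prod_kv^k_{i,i}$. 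But $\C[V]^G$ restricts to $V^{\rho^\vee}$ only as the symmetric polynomials in the $p_i$. The discrepancy $\dim_qA/\dim_q\C[V]^G=\prod_{i=1}^N\frac{1-q^{il}}{1-q^l}$ is exactly what the alternation over $W$ must produce.

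Since $\M_q(\lambda)$ is an alternating sum, there is no basis of $\C[\N_V]$ indexed by weakly negative functions for a standard-monomial argument to find. The formula holds only after cancellation under $W$. That is why an identity like the one above, not a term-order argument, is what the proof needs.
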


Note that in case \ref{eg:adj}, $\N_V$ is the usual nilpotent cone of $\g$, and we recover the classical result of Hesselink described earlier for the $q$-character of functions on the nilpotent cone. 

We expect that there are more representations for which this formula holds, and it would be interesting to classify all such representations.

\begin{example}\label{eg:1,1N}
    Let us revisit the case of Example \ref{eg:1,1}. If $v_+,v_-$ are basis vectors of $\Hom(V_1,V_2)^*$, $\Hom(V_2,V_1)^*$, respectively, then $\C[V]^G=\C[v_+v_-]$, and we have $\C[\N_V]=\C[v_+,v_-]/(v_+v_-)$. Choose an isomorphism $\h^*\cong\C$ so that $v_+$ is the vector of weight $1$ and $v_-$ is the vector of weight $-1$. Since $R$ is empty, all weights are dominant. We have $W=1$, so $\M_q(k)=\P_q(k)$ and Theorem \ref{thm} reads
    \[\ch_q\C[\N_V]=\sum_{k\in\Z}q^{|k|}\chi_k.\]
    Indeed, for nonnegative $k$ the unique copy of $\chi_k$ in $\C[\N_V]$ is spanned by $v_+^k$ in degree $k$, and the unique copy of $\chi_{-k}$ in $\C[\N_V]$ is spanned by $v_-^k$ in degree $k$.

    Technically, Theorem \ref{thm} only considers the case where we take $\g$ to be the subspace of $\gl(V_1\oplus V_2)$ that fixes $V_1,V_2$ rather than $\sl(V_1\oplus V_2)$, but the theorem carries over to the latter since the copy of $\C$ in $\gl(V_1\oplus V_2)\cong\sl(V_1\oplus V_2)\oplus\C$ acts by zero on $V$. This also holds for all cyclic quivers.
\end{example}

\begin{remark}
    Outside the classes of representations encompassed by Theorem \ref{thm}, we also found similar-looking formulas for $\ch_q\C[\N_V]$ for some other representations $V$, which are described in Section \ref{sec:nonexamples}. We suspect there may be a more general formula for the $q$-character of the nilpotent cone for a broader class of representations, which we would be very interested in but were unable to find.
\end{remark}

For a finite dimensional $G$-representation $V$, let $V^{\rho^\vee}$ be the invariants in $V$ under the action of $\rho^\vee\in\h$. In other words, $V^{\rho^\vee}$ is the direct sum of all weight spaces in $V$ whose weights belong to $S_0$. Monomials in $\C[V^{\rho^\vee}]$ correspond to functions $S_0\to\Z_{\geq 0}$, monomials in $\C[V^{\rho^\vee}]^\h$ correspond to functions in $(S_0)_0$, and monomials surviving in $\C[V^{\rho^\vee}]/\C[V^{\rho^\vee}]^\h_+\C[V^{\rho^\vee}]$ are exactly the ones corresponding to weakly negative functions on $S_0$. Then we introduce the following new class of $G$-representations, motivated by the proof of Theorem \ref{thm}.

\begin{definition}\label{def:hesselink}
    A \textbf{Hesselink-type} representation is a representation $V$ such that
    \[X_V:=\frac{\sum_{w\in W}(-1)^{\ell(w)}t^{w\rho}\prod_{\alpha\in S_+}(1-qt^{-w\alpha})}{\sum_{w\in W}(-1)^{\ell(w)}t^{w\rho}}\]
    is a $\Z[q]$-multiple of the trivial character.
\end{definition}

In the process of proving Theorem \ref{thm}, we will show that all representations encompassed by the theorem are Hesselink-type.

A finite dimensional $G$-representation $V$ is called \textbf{cofree} if $\C[V]$ is free over $\C[V]^G$ (see \cite{book}, Chapter 8.1). Note that all representations covered in Theorem \ref{thm} are cofree; the adjoint representation is cofree by Kostant's theorem, and all representations of cyclic quivers are cofree as they fall under the class of $\Z/l\Z$-graded Lie algebras, and all such representations are cofree (\cite{vinberg}, \cite{book}). For the case $\g=\prod\sl_2$, we prove cofreeness directly in Section \ref{sec:sl2}. We will further elaborate on this point when we discuss these cases directly in Sections \ref{sec:eqdim} and \ref{sec:2vertex}.

We make the following conjecture, connecting Hesselink-type representations, cofree representations, and Theorem \ref{thm}.

\begin{conjecture}\label{conj:cofree}
    Let $G$ be semisimple. If $V$ is a Hesselink-type representation, then $V$ is cofree. Moreover, \eqref{eq:id} and Theorem \ref{thm} hold for $V$.
\end{conjecture}

\subsection{Motivation}

This paper was originally motivated by $D$-modules on the cyclic quiver and their relation to representations of the rational Cherednik algebra. Namely, let $V$ be the representation of a cyclic quiver with equal dimensions as in Case \ref{eg:eqdim} of Theorem \ref{thm} and let $D^G(V)$ be the category of $G$-equivariant $D$-modules on $V$. Consider the group $G(l,1,N):=(\Z/l\Z)^N\rtimes S_N$ and let $t,c,d_1,\dots,d_l$ be complex numbers. Then one has the \emph{rational Cherednik algebra} $H_{t,c,d}(G(l,1,N))$ (see \cite{rationalcherednik}), and in \cite{montarani} Montarani produced a functor $F_{N,\chi}:D^G(V)\to\text{Rep}\,H_{t,c,d}(G(l,1,N))$. Through this functor one may study graded characters of $H_{t,c,d}(G(l,1,N))$-modules by studying the $q$-characters of equivariant $D$-modules on $V$. 

The simplest case of an equivariant $D$-module on $V$ is the space of global functions $\C[V]$. In this case the $q$-character of $\C[V]$ as an equivariant $D$-module is exactly the $q$-character of $\C[V]$ as described above, and a formula for $\ch_q\C[V]$ is equivalent to a formula for $\ch_q\C[\N_V]$ via Lemma \ref{lem:cofreeiso}. We first derived the formula in Theorem \ref{thm} for this case, and after discussions with Pavel Etingof and Zhiwei Yun we realized it would be natural to consider generalizations of this formula for other groups and representations. 

In particular, much of this paper is a result of an attempt to generalize our formula to the $q$-character of the nilpotent cone of a $\Z/l\Z$ graded Lie algebra, following the work of Lusztig-Yun and others on graded Lie algebras (\cite{wille},\cite{LUSZTIG1995147},\cite{lusztigyun}). More explicitly, if $\tilde{\g}=\bigoplus_{i\in\Z/l\Z}\g_i$ is a graded Lie algebra with $[\g_i,\g_j]\subset\g_{i+j}$ and $\tilde{\N}$ is the nilpotent cone of $\tilde{\g}$, then the nilpotent cone of $\g_i$ is defined as $\N_i:=\g_i\cap\tilde{\N}$. Setting $\g=\g_0$ and $V=\g_i$, the nilpotent cone of $\g_i$ is exactly the nilpotent cone $\N_{\g_i}$ that we are considering in this paper. However, we found a few graded Lie algebras for which the formula in Theorem \ref{thm} does not hold, which we have included in Section \ref{sec:counter}. It would be an interesting problem to find a formula that does hold in this setting. 

\subsection{Structure of the paper}

In Section \ref{sec:id}, we reduce the formula in Theorem \ref{thm} to an identity for cofree representations and relate this identity to $X_V$ and other data of $V$. We also discuss formulas related to Theorem \ref{thm} obtained from analogous identities. In Section \ref{sec:cases}, we use the machinery of Section \ref{sec:id} to prove Theorem \ref{thm}. In Section \ref{sec:small}, we give a geometric interpretation of Hesselink-type representations and provide a few counterexamples to potential connections Hesselink-type representations could have had to other classes of representations beyond Conjecture \ref{conj:cofree}.

\subsection*{Acknowledgements}

We are very grateful to Pavel Etingof, who connected the two authors and with whom we had many useful discussions. We are also especially grateful to Zhiwei Yun, whose discussions with the second author led us to consider both $\Z/l\Z$-graded Lie algebras and the case where $G$ is a product of $SL_2$'s as in Section \ref{sec:sl2}. We are also grateful to Kenta Suzuki for useful discussions. The syzygy computations in Proposition \ref{prop:2cycle}, Remark \ref{rem:resolution}, Proposition \ref{prop:sl2}, and the counterexamples in Examples \ref{eg:g2} and \ref{eg:3232} are due to ChatGPT.
The first author was supported by the
Simons Foundation Award 888988 as part of the Simons Collaboration on Global Categorical
Symmetries. The second author was supported by the National Science Foundation Graduate Research Fellowship under Grant No.~2141064. 

\section{Reduction of formula to an identity}\label{sec:id}

In this section we will reduce Theorem \ref{thm} to an identity in terms of the weights of $V$ that will be easier to study. This identity will be central to the rest of the paper, and serves as the basis for the definition of Hesselink-type representations.

The following is inspired by \cite{hesselinkalg}, where it is proved for the adjoint representation.

\begin{proposition}\label{prop:hesselink}
    Let $V$ be a cofree representation of $G$, and let $P\in\Z[q][t^{-\alpha}]_{\alpha\in S}$ be a function. Let $M_q:\Lambda_+\to\Z[q]$ be a function such that
    \[\sum_{w\in W}(-1)^{\ell(w)}t^{w\rho}\sum_{\lambda\in\Lambda_+}t^{w\lambda}M_q(\lambda)=\sum_{w\in W}(-1)^{\ell(w)}t^{w\rho}w\left(\frac{P}{\prod_{\alpha\in S}(1-qt^{-\alpha})}\right).\]
    Then we have
    \[\ch_q\C[\N_V]=\sum_{\lambda\in\Lambda_+}M_q(\lambda)\chi_\lambda\]
    if and only if
    \begin{equation}
        \frac{\sum_{w\in W}(-1)^{\ell(w)}t^{w\rho}}{\dim_q\C[V]^G}=\sum_{w\in W}(-1)^{\ell(w)}t^{w\rho}wP.\label{eq:hesselink}
    \end{equation}
\end{proposition}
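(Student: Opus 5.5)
The approach is the classical one from Hesselink: use cofreeness to factor $\C[V]$ as $\C[\N_V]\otimes\C[V]^G$ graded-equivariantly, then compare both sides of the desired character formula after applying Weyl's antisymmetrization. Any other $M_q$ with the same property changes both sides in the same way, so the equivalence will follow once the antisymmetrized characters are identified.

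\textbf{Step 1 (cofreeness).} Since $V$ is cofree, $\C[V]$ is a free $\C[V]^G$-module. A graded complement to $\C[V]^G_+\C[V]$ can be chosen $G$-stably by reductivity, and it maps isomorphically onto $\C[\N_V]$. This gives an isomorphism of graded $G$-modules $\C[V]\cong\C[\N_V]\otimes\C[V]^G$. The later Lemma~\ref{lem:cofreeiso} presumably records exactly this; I would reprove it by graded Nakayama if needed. Consequently
\[\ch_q\C[V]=\dim_q\C[V]^G\cdot\ch_q\C[\N_V].\]

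\textbf{Step 2 (character of $\C[V]$).} $\C[V]=\mathrm{Sym}(V^*)$ has weights $-\alpha$ for $\alpha\in S$, each in degree $1$. Hence
\[\ch_q\C[V]=\prod_{\alpha\in S}(1-qt^{-\alpha})^{-1}.\]
This expression is $W$-invariant, since $S$ is $W$-stable as a multiset.

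\textbf{Step 3 (antisymmetrize).} Define the operator $A(f)=\sum_{w}(-1)^{\ell(w)}t^{w\rho}wf$. It is injective on formal characters: multiplying a $W$-invariant $f$ by the Weyl denominator $A(1)$ is injective, and for general $f$ the images of distinct dominant-shifted terms do not cancel. By the Weyl character formula, $A(\chi_\lambda)=A(1)\chi_\lambda=\sum_w(-1)^{\ell(w)}t^{w(\lambda+\rho)}$. The hypothesis on $M_q$ therefore says precisely
\[A(1)\sum_\lambda M_q(\lambda)\chi_\lambda=A\Big(P\prod_{\alpha}(1-qt^{-\alpha})^{-1}\Big)=\ch_q\C[V]\cdot A(P),\]
using the $W$-invariance from Step 2 to pull the product out. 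Meanwhile Step 1 gives
\[A(1)\ch_q\C[\N_V]=A(1)\,\ch_q\C[V]/\dim_q\C[V]^G.\]

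\textbf{Step 4 (conclude).} The formula $\ch_q\C[\N_V]=\sum_\lambda M_q(\lambda)\chi_\lambda$ holds if and only if the two sides agree after multiplying by $A(1)$; this uses that multiplication by $A(1)$ is injective on $W$-invariant characters. By Step 3 that agreement reads
\[\ch_q\C[V]\cdot A(P)=\ch_q\C[V]\cdot A(1)/\dim_q\C[V]^G.\]
Since $\ch_q\C[V]$ is invertible in the appropriate completion, this is equivalent to \eqref{eq:hesselink}.

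The main obstacle is making sense of the ring in which all these manipulations live. The expressions $\prod(1-qt^{-\alpha})^{-1}$ involve infinitely many weights in each $q$-degree, and $1/\dim_q\C[V]^G$ must be expanded. I would work in $\Z[\Lambda][[q]]$: each $q$-coefficient of $\ch_q\C[V]$ is a finite sum of weights, because the $q^d$ part has finitely many monomials. In this ring $\ch_q\C[V]$ has constant term $1$, hence is a unit, and multiplication by $A(1)$ is injective. The sum $\sum_\lambda M_q(\lambda)\chi_\lambda$ must also be interpreted degreewise in $q$; this is legitimate because the hypothesis forces each $q$-coefficient to be finite.
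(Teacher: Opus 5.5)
Your proposal is correct and follows the paper's proof. Cofreeness gives $\C[V]\cong\C[\N_V]\otimes\C[V]^G$ (Lemma~\ref{lem:cofreeiso}); the $W$-invariant factor $\prod_{\alpha\in S}(1-qt^{-\alpha})^{-1}$ can be pulled out of the antisymmetrization; and the Weyl character formula turns the hypothesis on $M_q$ into a comparison equivalent to \eqref{eq:hesselink}. Your extra bookkeeping in $\Z[\Lambda][[q]]$ makes explicit what the paper leaves to ``reversing all equalities.'' That bookkeeping is the invertibility of $\ch_q\C[V]$ and $\dim_q\C[V]^G$, injectivity of multiplying by the Weyl denominator on $W$-invariant elements, and injectivity of antisymmetrization on elements supported on dominant weights (not on arbitrary $f$, since $A(t^{-\rho})=0$).
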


Note that for all of the cases in Theorem \ref{thm}, we will prove \eqref{eq:hesselink} with $P=\frac{\prod_{\alpha\in S_+}(1-qt^{-\alpha})}{\dim_q\C[V^{\rho^\vee}]^\h}$, or in other words we will prove that
\begin{equation}
    X_V=\frac{\dim_q\C[V^{\rho^\vee}]^{\h}}{\dim_q\C[V]^G}\label{eq:id}
\end{equation}
where $X_V$ was defined in Definition \ref{def:hesselink}. In particular, this implies $V$ is Hesselink-type. We will also show that in all of those cases, $\C[V^{\rho^\vee}]$ is free over $\C[V^{\rho^\vee}]^\h$, which implies by the definition of $\P_q$ that
\[\sum_{\lambda\in\Lambda}\P_q(\lambda)t^\lambda=\frac{P}{\prod_{\alpha\in S}(1-qt^{-\alpha})},\]
so we obtain $M_q=\M_q$ and Proposition \ref{prop:hesselink} becomes Theorem \ref{thm}. 

\begin{remark}
    A $G$-representation $V$ is said to be \textbf{coregular} if $\C[V]^G$ is a free commutative algebra. For a coregular representation $V$ one defines the \emph{exponents} $m_1,\dots,m_r$ of $V$ so that the free generators of $\C[V]^G$ have degrees $m_1+1,\dots,m_r+1$. Any cofree representation must be coregular (see Chapter 8.1 of \cite{book}), so we may rewrite \eqref{eq:id} as
    \[X_V=\dim_q\C[V^{\rho^\vee}]^\h\prod_{i=1}^r(1-q^{m_i+1}).\]
\end{remark}

Before we can prove the proposition, we need the following auxiliary lemma. 

\begin{lemma}\label{lem:cofreeiso}
    Let $V$ be cofree. Then we have an isomorphism of graded $G$-modules $\C[\N_V]\otimes\C[V]^G\cong\C[V]$.
\end{lemma}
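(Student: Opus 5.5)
The plan is to build a $G$-equivariant graded splitting of the quotient map $\C[V]\to\C[\N_V]$, use it to define a multiplication map $\C[\N_V]\otimes\C[V]^G\to\C[V]$, and show that this map is bijective using cofreeness and a graded Nakayama argument.

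Write $A=\C[V]^G$, $A_+=\C[V]^G_+$ and $I=A_+\C[V]$, so that $\C[\N_V]=\C[V]/I$.

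\textbf{Step 1: a harmonic complement.} Each graded piece $\C[V]_d$ is a finite-dimensional $G$-module and $I_d\subset \C[V]_d$ is a $G$-submodule. Since $G$ is reductive, we can choose a $G$-stable complement $H_d$ with $\C[V]_d=I_d\oplus H_d$. Set $H=\bigoplus_d H_d$. The projection then gives an isomorphism of graded $G$-modules $H\cong \C[\N_V]$.

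\textbf{Step 2: the multiplication map.} Consider
\[\mu\colon H\otimes A\to\C[V],\qquad h\otimes f\mapsto hf.\]
This map is $G$-equivariant because $A$ consists of invariants. It is graded because we use the tensor product grading.

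\textbf{Step 3: surjectivity.} We argue by graded Nakayama, inducting on degree. Any $x\in\C[V]_d$ can be written as $x=h+\sum_i a_i y_i$ with $h\in H_d$, $a_i\in A_+$ homogeneous of positive degree, and $y_i$ homogeneous of degree less than $d$. By induction each $y_i$ lies in the image of $\mu$, and hence so does $x$. This step does not use cofreeness.

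\textbf{Step 4: injectivity.} This is the main point, and it is where cofreeness enters. Since $\C[V]$ is a graded free $A$-module, the standard graded fact applies: any homogeneous set whose images form a basis of $\C[V]/A_+\C[V]$ is an $A$-basis of $\C[V]$. To prove this, let $F$ be a graded free module with basis mapping to such a homogeneous set; this gives a surjection $F\to \C[V]$.
\begin{itemize}
\item Because $\C[V]$ is free, the surjection splits, so $F\cong \C[V]\oplus K$.
\item Tensoring with $A/A_+=\C$ gives an isomorphism on the $\C[V]$ summand by construction. Therefore $K/A_+K=0$.
\item The module $K$ is graded and bounded below, so graded Nakayama gives $K=0$.
\end{itemize}
Applying this with a homogeneous basis of $H$ shows that $\mu$ is injective.

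Alternatively, one can compare Hilbert series. Freeness gives $\dim_q\C[V]=\dim_q(\C[V]/I)\cdot\dim_q A$, because a minimal homogeneous basis has Hilbert series $\dim_q(\C[V]/I)$ by Nakayama. Combined with surjectivity and finite-dimensionality of each graded piece, this forces $\mu$ to be bijective degree by degree.

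\textbf{Conclusion.} The map $\mu$ is therefore an isomorphism of graded $G$-modules. Composing with $H\cong\C[\N_V]$ gives $\C[\N_V]\otimes\C[V]^G\cong\C[V]$.

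The only delicate point is Step 4: making sure that freeness over $A$ implies that a lift of a basis of $\C[V]/I$ is free. The dimension-count route avoids constructing the splitting explicitly, so I would use it.
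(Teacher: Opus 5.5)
Your proof is correct and rests on the same decomposition as the paper's: $\C[V]\cong \C[V]^G\otimes H$ for a $G$-stable graded subspace $H\cong \C[V]/\C[V]^G_+\C[V]$. The difference is which half of this fact is taken for granted.

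The paper opens by asserting that cofreeness lets one write $\C[V]=\bigoplus_i \C[V]^G L_{\lambda_i}$, i.e.\ a free basis spanning $G$-stable irreducible pieces. It then checks only the easy direction, namely $\C[V]^G_+\C[V]=\bigoplus_i \C[V]^G_+ L_{\lambda_i}$, which gives $\C[\N_V]\cong\bigoplus_i L_{\lambda_i}$.

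You go the other way. You start from a $G$-stable graded complement $H$ to $\C[V]^G_+\C[V]$, which is isomorphic to $\C[\N_V]$ by construction. You then prove via graded Nakayama and the splitting argument that multiplication $H\otimes\C[V]^G\to\C[V]$ is bijective. Your Steps 1--4 are exactly the justification of the paper's first sentence. That sentence does not follow immediately from the definition of cofree, since a free basis need not a priori be homogeneous or $G$-stable. So your argument is the more self-contained of the two.

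One small caveat: your Hilbert-series alternative in Step 4 presupposes that $\C[V]$ has a homogeneous $\C[V]^G$-basis. Unless you read ``free'' as ``graded free,'' that fact is itself proved by the splitting-plus-Nakayama argument you gave. So the dimension count does not really avoid the splitting, and your first route is the one to keep.
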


\begin{proof}
    Since $V$ is cofree, we may write $\C[V]=\bigoplus_i\C[V]^GL_{\lambda_i}$ for some (not necessarily distinct) irreducible $G$-representations $L_{\lambda_i}$. Let $f\in\C[V]_+^G\C[V]$, and write $f=\sum_{r=1}^Ns_rg_r$ for $s_r\in\C[V]^G_+$. Writing each $g_r$ in terms of the $L_{\lambda_i}$, we see that $f\in\bigoplus_i\C[V]^G_+L_{\lambda_i}$, and conversely we have $\bigoplus_i\C[V]^G_+L_{\lambda_i}\subset\C[V]^G_+\C[V]$, so we have equality. It follows that as a graded $G$-module we have $\C[\N_V]\cong\bigoplus_iL_{\lambda_i}$ and the result follows.
\end{proof}

\begin{proof}[Proof of Proposition \ref{prop:hesselink}]
    Let $v_1,\dots,v_n$ be a basis of $V$ consisting of weight vectors, such that each $v_i$ has weight $\alpha_i$. Then as a $T$-representation, we have $\ch_q\C[v_i^*]=\frac{1}{1-qt^{-\alpha_i}}$. Thus we get
    \[\ch_q\C[V]=\ch_q\left(\bigotimes_{i=1}^n\C[v_i^*]\right)=\prod_{i=1}^n\ch_q\C[v_i^*]=\frac{1}{\prod_{\alpha\in S}(1-qt^{-\alpha})}.\]
    Let $\ch_q\C[\N_V]=\sum_{\lambda\in\Lambda_+}c_q(\lambda)\chi_\lambda$ for $c_q:\Lambda_+\to\Z_{\geq 0}[q]$. By the Weyl character formula and Lemma \ref{lem:cofreeiso}, we have
    \[\sum_{w\in W}(-1)^{\ell(w)}\sum_{\lambda\in\Lambda_+}t^{w(\lambda+\rho)}c_q(\lambda)=\prod_{\alpha\in S}\left(\frac{1}{1-qt^{-\alpha}}\right)\frac{\sum_{w\in W}(-1)^{\ell(w)}t^{w\rho}}{\ch_q\C[V]^G}.\]
    Now, using \eqref{eq:hesselink} this is equal to
    \[\prod_{\alpha\in S}\left(\frac{1}{1-qt^{-\alpha}}\right)\sum_{w\in W}(-1)^{\ell(w)}t^{w\rho}wP=\sum_{w\in W}(-1)^{\ell(w)}t^{w\rho}wP\prod_{\alpha\in S}\left(\frac{1}{1-qt^{-w\alpha}}\right)\]
    since $\prod_{\alpha\in S}\frac{1}{1-qt^{-\alpha}}$ is $W$-invariant. Now, the result follows from the definition of $M_q$. Reversing all equalities proves the converse.
\end{proof}

\subsection{Other examples}\label{sec:nonexamples}

Before we prove Theorem \ref{thm}, we give some examples of representations outside the scope of Theorem \ref{thm} for which a similar formula to Theorem \ref{thm} holds using the machinery of Proposition \ref{prop:hesselink}. In particular, none of the representations here are Hesselink-type. It would be interesting to see if there is a ``universal" formula that generalizes all formulas of this type.

In all of these cases we will let $P_q:\Lambda\to\Z[q]$ be a function such that
\[M_q(\lambda)=\sum_{w\in W}(-1)^{\ell(w)}P_q(w(\lambda+\rho)-\rho),\]
and we have a formula for $\C[\N_V]$ analogous to Theorem \ref{thm}, but with $\P_q,\M_q$ replaced with $P_q,M_q$ as below.

\begin{enumerate}
    \item $\g=\sl_2, V=V_3$. Then $P=(1-qt^{-1})(1-q^2-q^2t^{-2}+q^3t)$. $P_q$ consists of terms corresponding to functions $m:S\to\Z_{\geq 0}$ with $m(1)=0$, $\min(m(3),m(-1))=0, \min(m(3),m(-3))=0$. In other words, we have the union of linear combinations of $-1,-3$ and multiples of $3$.
    \item $\g=\sl_2, V=V_4$. Then $P=(1-q)(1-qt^{-2})(1-q^2-q^2t^{-2}+q^3t^2)$. $P_q$ consists of terms corresponding to functions $m:S\to\Z_{\geq 0}$ with $m(0)=m(2)=0$, $\min(m(4),m(-2))=0, \min(m(4),m(-4))=0$. In other words, we have the union of linear combinations of $-2,-4$ and multiples of $4$.
    \item $\g=(\sl_2)^2,V=V_2\boxtimes V_1$. Then $P=(1-qt^{(-2,-1)})(1-qt^{(0,-1)})(1-q^2)$. $P_q$ consists of terms corresponding to functions $m:S\to\Z_{\geq 0}$ with $m((2,1))=m((0,1))=0$, and $\min(m((2,-1)),m((-2,1)))=0$. In other words, this is the same formula as the old one except $S_+=\{(2,1),(0,1)\},S_0=\{(2,-1),(-2,1)\},S_-=\{(-2,-1),(0,-1)\}$.
    \item Let $\g$ be of type $B_2$, and let $\alpha_1,\alpha_2$ be the short and long simple roots, respectively. Choose a coordinate basis of $\h^*$ so that $\alpha_1=(2,0), \alpha_2=(-2,2)$. Let $\omega_1,\omega_2$ be the fundamental weights corresponding to $\alpha_1^\vee, \alpha_2^\vee$, so we have $\omega_1=(1,1), \omega_2=(0,2)$ using the above basis.
    \begin{enumerate}
        \item Let $V=L_{\omega_1}$. Then $\dim V=4$, and the weight spaces of $V$ have weights $(\pm 1,\pm 1)$. There are (at least) two valid choices of $P$, given by $P=(1-qt^{(1,-1)})(1-qt^{(-1,1)})(1-qt^{(-1,-1)})$ and $P=(1-qt^{(1,-1)})(1-qt^{(-1,-1)})$. These yield formulas for $\P_q$ with $S_+,S_0,S_-$ replaced by either $S_+=\{(1,1)\}, S_0=\varnothing$ or $S_+=\{(1,1),(-1,1)\},S_0=\varnothing$, respectively.
        \item Let $V=L_{\omega_2}$. Then $\dim V=5$, and the weights in $V$ are $(0,\pm 2), (\pm 2,0),(0,0)$. There are (at least) two valid choices of $P$ given by $P=(1-q)(1-qt^{(-2,0)})(1-qt^{(0,-2)})(1-qt^{(2,0)})$ and $P=(1-q)(1-qt^{(-2,0)})(1-qt^{(0,-2)})$. These yield formulas for $\P_q$ with $S_+,S_0$ replaced by either $S_+=\{(0,2)\},S_0=\varnothing$ or $S_+=\{(0,2),(2,0)\},S_0=\varnothing$, respectively.
    \end{enumerate}
\end{enumerate}

\section{Proof of Theorem \ref{thm}}\label{sec:cases}

We will now prove Theorem \ref{thm} on a case-by-case basis.

\subsection{Proof for the adjoint representation}

When $V$ is the adjoint representation, $S$ is a multiset consisting of $R$ and $\rk\,\g$ copies of 0 corresponding to $\h\subset V$. We have $S_+=R_+$ is the set of positive roots and $V^{\rho^\vee}=\h$. So $\dim_q\C[V^{\rho^\vee}]^\h=\dim_q\C[\h]=\frac{1}{(1-q)^{\rk\g}}$. $\C[V^{\rho^\vee}]=\C[\h]$ is free over $\C[V^{\rho^\vee}]^\h$ of rank 1. Moreover, $\dim_q\C[V]^G=\frac{1}{\prod_{i=1}^{\rk\g}(1-q^{m_i+1})}$ where $m_1,\dots,m_{\rk\g}$ are the exponents of $\g$. So \eqref{eq:id} becomes
\[X_V:=\frac{\sum_{w\in W}(-1)^{\ell(w)}t^{w\rho}\prod_{\alpha\in S_+}(1-qt^{-w\alpha})}{\sum_{w\in W}(-1)^{\ell(w)}t^{w\rho}}=\prod_{i=1}^{\rk\g}\frac{1-q^{m_i+1}}{1-q}=\sum_{w\in W}q^{\ell(w)},\]
where the last equality is a well-known formula \cite[Theorem 3.15]{humphreys1992reflection}. This identity is now exactly Equation 2.16 of \cite{macdonald}.

\begin{remark}
    Recall from the introduction that in this case the formula in Theorem \ref{thm} is exactly the formula for the $q$-character of the ordinary nilpotent cone from \cite{mcgovern1989}. The usual proof of that result uses the Springer resolution to express $\C[\N]$ (viewed as a representation of $G\times\C^\times$) as a sum of cohomology groups of coherent sheaves on $T^*(G/B)$, while the current paper provides a purely algebraic proof of the same statement.
\end{remark}

\subsection{Proof for the cyclic quiver with equal dimensions}\label{sec:eqdim}

Let $V_1,\dots,V_l$ be vector spaces of dimension $N$. We now consider when $G=\prod_{k\in\Z/l\Z}GL(V_k),V=\bigoplus_{k\in\Z/l\Z}\Hom(V_k,V_{k+1})$. Then $\g$ has a basis given by $\{\alpha_{i,j}^k\}_{1\leq i,j\leq N,k\in\Z/l\Z}$ where $\alpha_{i,j}^k$ has a 1 in the $(i,j)$-th entry of $\gl(V_k)$ and zeroes elsewhere, and $V^*$ has a basis given by $\{v_{i,j}^k\}_{1\leq i,j\leq N,k\in\Z/l\Z}$ where $v_{i,j}^k\in\Hom(V_k,V_{k+1})^*\cong\Hom(V_{k+1},V_k)$ sends the $j$-th basis vector in $V_{k+1}$ to the $i$-th basis vector in $V_k$. A basis of $\h$ is given by $\{\alpha_{i,i}^k\}$, and a basis for the weight lattice is then given by $\{\b{e}_i^k\}$, which we define to be the dual basis to our basis of $\h$. Then $v_{i,j}^k$ are weight vectors with weight $\b{e}_i^k-\b{e}_j^{k+1}$. Identifying $\h\cong\h^*$ via the dot product with respect to the given basis of $\h$, we have $\rho=\rho^\vee=(\rho_1,\dots,\rho_l)$ where $\rho_k=\sum_{i=1}^N\frac{N+1-2i}{2}\b{e}_i^k$ is the usual element $\rho$ for $\gl_n$. So we get
\[S_+=\{\b{e}_i^k-\b{e}_j^{k+1}\mid i<j\},\quad S_0=\{\b{e}_i^k-\b{e}_i^{k+1}\},\quad S_-=\{\b{e}_i^k-\b{e}_j^{k+1}\mid i>j\}.\]
Then it is clear from this description that $\C[V^{\rho^\vee}]^\h$ is the free algebra generated by the generators $\prod_{k=1}^lv_{i,i}^k$ for $1\leq i\leq N$, as the corresponding weight is $\sum_{k\in\Z/l\Z}(\b{e}_i^k-\b{e}_i^{k+1})=0$. Thus we get $\dim_q\C[V^{\rho^\vee}]^\h=\frac{1}{(1-q^l)^N}$. Moreover, it is clear that $\C[V^{\rho^\vee}]$ is free over $\C[V^{\rho^\vee}]^\h$ with generators given by monomials in the $v_{i,i}^k$ where for each $i$, there is at least one $k$ for which the degree of the monomial in $v_{i,i}^k$ is zero.

On the other hand, this case falls under the framework of $\Z/l\Z$-graded Lie algebras, as $\g,V$ are exactly the pieces $\g_0,\g_1$ of the Lie algebra $\tilde{\g}:=\gl(\bigoplus_{k\in\Z/l\Z}V_k)$ where the grading is given by $\g_i:=\bigoplus_{k\in\Z/l\Z}\Hom(V_k,V_{k+i})$. In this setting, Vinberg showed that the space $\C[V]^G$ is free and computed the exponents of $\g_1$ explicitly \cite{vinberg}. For our setting, we obtain $\dim_q\C[V]^G=\frac{1}{\prod_{i=1}^N(1-q^{il})}$, so we have
\[\frac{\dim_q\C[V^{\rho^\vee}]^\h}{\dim_q\C[V]^G}=\prod_{i=1}^N\frac{1-q^{il}}{1-q^l}.\]
Note that an essentially equivalent computation was also done explicitly in \cite{gancyclicquiver}.

The last piece of information we need before proving the theorem for this case is the Weyl group, which can be expressed as $W=\prod_{k\in\Z/l\Z}W_k$, where $W_k\cong S_N$ is the Weyl group of $GL(V_k)$, acting on $\h$ the usual way.

\begin{lemma}\label{lem:eqdim}
    We have
    \[\sum_{w\in W}(-1)^{\ell(w)}t^{w\rho}\prod_{\alpha\in S_+}(1-qt^{-w\alpha})=\left(\prod_{i=1}^N\frac{1-q^{il}}{1-q^l}\right)\sum_{w\in W}(-1)^{\ell(w)}t^{w\rho}.\]
\end{lemma}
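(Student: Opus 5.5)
The plan is to show that the ratio of the two sides is a constant in $t$. Then we identify that constant by counting the terms that survive.

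Write $A(\mu):=\sum_{w\in W}(-1)^{\ell(w)}t^{w\mu}$. Expanding the product, the left side becomes
\[
\sum_{B\subseteq S_+}(-q)^{|B|}A\Big(\rho-\sum_{\alpha\in B}\alpha\Big).
\]
For each $\mu_B:=\rho-\sum_{\alpha\in B}\alpha$, either $\mu_B$ is singular, so $A(\mu_B)=0$, or $\mu_B=w(\lambda+\rho)$ with $\lambda$ dominant, so $A(\mu_B)=(-1)^{\ell(w)}A(\lambda+\rho)$.

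The first key step is a norm estimate. Use the inner product on $\h^*$ given by the dot product in the basis $\b{e}_i^k$. We want to show that
\[
|\mu_B|^2\le|\rho|^2,
\]
with equality only when $\mu_B\in W\rho$. Since regular dominant weights of the form $\lambda+\rho$, with $\lambda$ dominant and $\lambda\neq0$, satisfy $|\lambda+\rho|>|\rho|$, this forces only $\lambda=0$ to contribute. The left side is then $c(q)A(\rho)$ with
\[
c(q)=\sum_{(B,w):\,\mu_B=w\rho}(-q)^{|B|}(-1)^{\ell(w)}.
\]
To prove the estimate, look at the weight component by component. Subtracting $\alpha=\b{e}_i^k-\b{e}_j^{k+1}$ with $i<j$ lowers the $i$-th coordinate of the $k$-th block and raises the $j$-th coordinate of the $(k+1)$-st block. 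So in each block $k$, the vector $\rho_k$ is modified by lowering entries at some multiset of "left endpoints" and raising entries at a multiset of "right endpoints". Each such move pushes larger entries down or smaller entries up. I would prove the inequality by pairing the contributions of the arrow $k\to k+1$ and using $\sum_k(\text{outgoing}-\text{incoming})$ bookkeeping, possibly via a majorization argument showing that each block $(\mu_B)_k$ is majorized by $\rho_k$. I expect this to be the main obstacle. What I would try first is the observation that within a block the lowered positions $i$ and the raised positions $j$ satisfy $i<j$ edge-wise, which should give majorization block by block.

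The second step classifies the equality cases $\mu_B=w\rho$ with $w=(w_1,\dots,w_l)$. Majorization with equal norm forces each block to be a permutation of $\rho_k$, and each entry can then change by at most what the edge structure allows. I expect the equality cases to be exactly the following: for a single $\sigma\in S_N$, take $w_k=\sigma$ for all $k$, and let $B$ consist, for every $k$, of the roots $\b{e}_i^k-\b{e}_j^{k+1}$ with $(i,j)$ an inversion pair of $\sigma$. This case must be checked directly so that the lowered and raised coordinates in each block reproduce $\sigma\rho_k$. Then $|B|=l\cdot\mathrm{inv}(\sigma)$ and $(-1)^{\ell(w)}=(-1)^{l\,\mathrm{inv}(\sigma)}$, so the signs cancel and
\[
c(q)=\sum_{\sigma\in S_N}q^{l\,\mathrm{inv}(\sigma)}=\prod_{i=1}^N\frac{1-q^{il}}{1-q^l},
\]
by the standard formula for the inversion generating function of $S_N$ (the $\ell=1$ case used in the adjoint case, with $q$ replaced by $q^l$).

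Uniqueness of $B$ given $w$ should follow from the majorization equality: every coordinate that must drop has a forced set of partners. If this combinatorial classification proves messy, the fallback is the following. Having shown that the ratio is a $t$-independent constant, compute $c(q)$ by extracting the coefficient of $t^\rho$ in the left side. Alternatively, reduce to the case $l=1$, which is the adjoint case for $\gl_N$ already handled above via \cite{macdonald}, by specializing all $t^{\b{e}_i^k}$ to a common $t^{\b{e}_i}$ after the $t$-independence is known. One then tracks how the $l$ copies of each positive root combine, which is what turns $q$ into $q^l$.
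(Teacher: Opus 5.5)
Your overall architecture matches the paper's, and your final count $\sum_{\sigma}q^{l\,\mathrm{inv}(\sigma)}$ with its sign cancellation is right. That architecture is: expand the product, discard singular $\mu_B$ by antisymmetry, show the survivors lie in $W\rho$, then read off the coefficient of $t^\rho$ from the diagonal elements $(\sigma,\dots,\sigma)$.

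\textbf{The norm bound is false.} Your central step $|\mu_B|^2\le|\rho|^2$ fails. Take $N=3$, $l=2$, $\rho_k=(1,0,-1)$, and
$B=\{\b{e}_1^2-\b{e}_2^1,\ \b{e}_1^2-\b{e}_3^1,\ \b{e}_2^2-\b{e}_3^1\}\subset S_+$.
Then $\mu_B=((1,1,1),(-1,-1,-1))$, so $|\mu_B|^2=6>4=|\rho|^2$. Block-by-block majorization cannot hold either. The arrows into and out of block $k$ need not balance, so the block sums of $\mu_B$ differ from those of $\rho_k$ (here they are $3$ and $-3$). Your equality characterization also fails: for $N=2$, $l=2$, $B=\{\b{e}_1^1-\b{e}_2^2\}$ gives $|\mu_B|=|\rho|$ with $\mu_B$ singular. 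So both the reduction to $\lambda=0$ and your second-step argument (``majorization with equal norm forces a permutation of $\rho_k$'') rest on a false lemma.

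\textbf{What to use instead: a coordinatewise bound.} The coordinate $\b{e}_i^k$ can only be lowered, by the $N-i$ weights $\b{e}_i^k-\b{e}_j^{k+1}$ with $j>i$. It can only be raised, by the $i-1$ weights $\b{e}_j^{k-1}-\b{e}_i^k$ with $j<i$. Hence every entry of block $k$ of $\mu_B$ lies in $[\frac{1-N}{2},\frac{N-1}{2}]$ and has the same fractional part as the entries of $\rho_k$. So block $k$ either has a repeated entry (singular) or is a permutation of $\rho_k$.

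\textbf{The classification needs a proof.} You assert it but do not prove it, and the same bound supplies the argument. Suppose $\mu_B=w\rho$, and let $i_1^k$ be the index where block $k$ attains $\frac{N-1}{2}$. This forces every $\b{e}_j^{k-1}-\b{e}_{i_1^k}^k$ ($j<i_1^k$) into $B$ and every $\b{e}_{i_1^k}^k-\b{e}_j^{k+1}$ ($j>i_1^k$) out of $B$. If $i_1^k<i_1^{k+1}$ for some $k$, the weight $\b{e}_{i_1^k}^k-\b{e}_{i_1^{k+1}}^{k+1}$ would have to be both in and out of $B$. So, going around the cycle, all $i_1^k$ coincide. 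Iterating with the next largest value (the constraints already fixed by $i_1$ flip the one affected inclusion) gives $w_1=\dots=w_l$, a unique $B$, and $|B|=l\cdot\ell(w_1)$.

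\textbf{The fallback does not help.} Specializing $t^{\b{e}_i^k}\mapsto t^{\b{e}_i}$ leaves the factors $1-qt^{\b{e}_{w_{k+1}(j)}-\b{e}_{w_k(i)}}$, which couple different $w_k$. So it is not the $l=1$ identity, and nothing is reduced.
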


\begin{proof}
    Let us first consider just the $w=1$ term on the left hand side. Expanding the product, this is equal to the sum
    \[\sum_{I\subset S_+}(-q)^{|I|}t^{\rho-\sum_{\alpha\in I}\alpha}.\]
    Let $\lambda_I:=\rho-\sum_{\alpha\in I}\alpha$. We will show that for all such $I$, either $\lambda_I\in W\rho$ or $\lambda_I$ lies on the reflection hyperplane of some root. Fix $k\in\Z/l\Z$, and choose any $1\leq i\leq N$. Consider the component of all relevant weights in a given $\b{e}_i^k$ in the basis of the weight lattice. We have $\langle\rho,\b{e}_i^k\rangle=\frac{N+1-2i}{2}$, and the elements of $S_+$ with nonzero component in the $\b{e}_i^k$ direction are $\b{e}_i^k-\b{e}_j^{k+1}$ for $i<j$ and $\b{e}_j^{k-1}-\b{e}_i^k$ for $i>j$. In particular, there are $N-i$ elements of $S_+$ that have inner product 1 with $\b{e}_i^k$ and $i-1$ elements with inner product $-1$. Thus the inner product $\langle\lambda_I,\b{e}_i^k\rangle$ is bounded above by $\frac{N+1-2i}{2}+(i-1)=\frac{N-1}{2}$ and bounded below by $\frac{N+1-2i}{2}-(N-i)=\frac{-N+1}{2}$. Note that this is independent of $i$, so we obtain that the components of $\lambda_I$ in the directions $\b{e}_1^k,\dots,\b{e}_N^k$ are all in the range $[\frac{-N+1}{2},\frac{N-1}{2}]$, and they are either all integers or all half-integers. If there are any $i\neq j$ such that the components of $\lambda_I$ in the $\b{e}_i^k,\b{e}_j^k$ directions are equal, then the reflection $s_{i,j}$ in $W_k$ fixes $\lambda_I$. Otherwise, note that there are exactly $N$ (half) integers in the range $[\frac{-N+1}{2},\frac{N-1}{2}]$, so the components of $\lambda_I$ in the $N$ coordinates $\b{e}_1^k,\dots,\b{e}_N^k$ are exactly some permutation of the components of $\rho_k$, so the $k$-th component of $\lambda_I$ lies in the $W_k$-orbit of $\rho_k$. Doing this for all $k$ yields the claim.

    Now, moving from $w=1$ to general $w$, the same holds, since the weights arising in the $w$ term of the left hand side are exactly $w$ acting on the weights arising from the $w=1$ term. Moreover, note that the left hand side is $W$-antiinvariant, so the coefficient of $t^\lambda$ for any $\lambda$ lying on a reflection hyperplane is zero. Thus we obtain that the left hand side is simply a sum of terms with weights lying in $W\rho$.

    Let us now compute the coefficient of $t^\rho$ on the left hand side. To do so, note that for the $w^{-1}$ term on the left hand side, the only $I\subset S_+$ that contribute to the coefficient of $t^\rho$ are those with $\lambda_I=w\rho$. Let us compute the $I$ for which this is true. For each $k\in\Z/l\Z$, $1\leq r\leq N$, let $i_r^k$ be such that $\langle w_k\rho_k,\b{e}_{i_r^k}^k\rangle=\frac{N+1-2r}{2}$. Then looking at $i_1^k$, we see from the above discussion that $\b{e}_{i_1^k}^k-\b{e}_j^{k+1}\not\in I$ for $j>i_1^k$ and $\b{e}_j^{k-1}-\b{e}_{i_1^k}^k\in I$ for $j<i_1^k$. In particular, if not all of the $i_1^k$ are equal, then there is some $k$ with $i_1^k<i_1^{k+1}$. Then we require both $\b{e}_{i_1^k}^k-\b{e}_{i_1^{k+1}}^{k+1}\in I$ and $\b{e}_{i_1^k}^k-\b{e}_{i_1^{k+1}}^{k+1}\not\in I$, which cannot happen. So we must have $i_1^1=i_1^2=\cdots=i_1^l$, call this index $i_1$.

    Next, looking at the $i_2^k$, we see that $\b{e}_{i_2^k}^k-\b{e}_j^{k+1}\not\in I$ for $j>i_2^k, j\neq i_1$ and $\b{e}_j^{k-1}-\b{e}_{i_2^k}^k\in I$ for $j<i_2^k, j\neq i_1$, with the opposite inclusion occurring when $j=i_1$ (which was forced by the discussion on $i_1$). By the same argument as in the previous step, this implies all of the $i_2^k$ are equal, so we may call this index $i_2$. Then continuing in this fashion, we obtain that $i_r^1=\cdots=i_r^l$ for all $r$, call these indices $i_r$ as well.

    Thus we see that if $\lambda_I=w\rho$ for some $I$, we must have $w=(w',w',\dots,w')$ for $w'\in W_k$. Moreover, the above discussion implies that for these $w$ there is only one set $I$ for which $\lambda_I=w\rho$. Tracing the argument, we see that $|I|=l\cdot\ell(w')$.

    Finally, let us show that the coefficients of terms with weights in $W\rho$ on the left hand side agree with those on the right. Note that since both sides of the equality are $W$-antiinvariant, it suffices to show just that the coefficients of $t^\rho$ are equal. From the above discussion we have that the only $w$ that contribute to the coefficient of $t^\rho$ are $w=(w',\dots,w')$, and such a $w$ contributes one term $(-1)^{\ell(w)}(-q)^{l\cdot\ell(w')}=(-1)^{l\cdot\ell(w')}(-q)^{l\cdot\ell(w')}=q^{l\cdot\ell(w')}$. So the coefficient of $t^\rho$ is
    \[\sum_{w'\in W_k}(q^l)^{\ell(w')}=\prod_{i=1}^N\frac{1-q^{il}}{1-q^l},\]
    where the equality is the same formula we used in the adjoint case, applied to $\gl(V_k)\cong\gl_N$.
\end{proof}

The identity in Lemma \ref{lem:eqdim} is equivalent to \eqref{eq:id}, so we have proved Theorem \ref{thm} in this case. Note that taking $l=1$, the proof degenerates to the proof for the adjoint case in type $A$.

\subsection{Proof for the cyclic quiver with two vertices}\label{sec:2vertex}

Following the previous case, we prove that \eqref{eq:id} holds for another class of cyclic quivers. Let $V_1,V_2$ be two complex vector spaces of dimensions $d_1,d_2$ which are not necessarily the same. Without loss of generality, let us say $d_1\geq d_2$. Let $G=GL(V_1)\times GL(V_2)$, $V=V_{d_1,d_2}=\Hom(V_1,V_2)\oplus\Hom(V_2,V_1)$. Using the same notation as in the previous section, $\g$ has a basis given by $\{\alpha^k_{i,j}\}_{k\in\{1,2\},1\leq i,j\leq d_k}$ and $\h$ has basis $\{\alpha^k_{i,i}\}$. We have $W=S_{d_1}\times S_{d_2}$. A basis for the weight lattice is given by $\{e_i^k\}_{k\in\{1,2\},1\leq i\leq d_k}$. Identifying $\h\cong\h^*$ via the dot product, we have $\rho=\rho^\vee=(\rho_1,\rho_2)$ where $\rho_k=\sum_{i=1}^{d_k}\frac{d_k+1-2i}{2}\b{e}_i^k$. As before, a basis of $V^*$ is given by $\{v_{i,j}^k\}$ where $k\in\{1,2\}$, and if $k=1$ then $1\leq i\leq d_1,1\leq j\leq d_2$ and if $k=2$ then $1\leq i\leq d_2,1\leq j\leq d_1$. Each $v_{i,j}^k$ is a weight vector with weight $\b{e}_i^k-\b{e}_j^{k+1}$. We have

\begin{align*}
    S_+&=\left\{\b{e}_i^k-\b{e}_j^{k+1}\mid d_k-2i >d_{k+1}-2j\right\}\\
    S_-&=\left\{\b{e}_i^k-\b{e}_j^{k+1}\mid d_k-2i<d_{k+1}-2j\right\}\\
    S_0&=\left\{\b{e}_i^k-\b{e}_j^{k+1}\mid d_k-2i=d_{k+1}-2j\right\}.
\end{align*}

In particular, if $d_1\equiv d_2\pmod{2}$ then $S_0$ consists of elements $\b{e}_{j+\frac{d_1-d_2}{2}}^1-\b{e}_j^2,\b{e}_j^2-\b{e}_{j+\frac{d_1-d_2}{2}}^1$ for $1\leq j\leq d_2$ and if $d_1\not\equiv d_2\pmod{2}$ then $S_0=\varnothing$. So we obtain in the former case that $\dim_q\C[V^{\rho^\vee}]^\h=\frac{1}{(1-q^2)^{d_2}}$ and in the latter case that $\dim_q\C[V^{\rho^\vee}]^\h=1$. It is also clear that $\C[V^{\rho^\vee}]$ is free over $\C[V^{\rho^\vee}]^\h$ in both cases.

As in the case of the cyclic quiver with equal dimensions, this case falls under the framework of $\Z/l\Z$-graded Lie algebras\footnote{More specifically, this case falls under the case of $\Z/2$-graded Lie algebras, which were studied extensively by Kostant and Rallis in \cite{Kostant1971OrbitsAR} under the name \emph{symmetric spaces}.}, and from \cite{vinberg} we obtain $\dim_q\C[V]^G=\frac{1}{\prod_{i=1}^{d_2}(1-q^{2i})}$. So if $d_1\equiv d_2\pmod{2}$ then
\[\frac{\dim_q\C[V^{\rho^\vee}]^\h}{\dim_q\C[V]^G}=\prod_{i=1}^{d_2}\frac{1-q^{2i}}{1-q^2}\]
and if $d_1\not\equiv d_2\pmod{2}$ then
\[\frac{\dim_q\C[V^{\rho^\vee}]^\h}{\dim_q\C[V]^G}=\prod_{i=1}^{d_2}(1-q^{2i}).\]
Observe that the only dependence of $\frac{\dim_q\C[V^{\rho^\vee}]^\h}{\dim_q\C[V]^G}$ on $d_1$ is on its value modulo 2. So we should expect the same of $X_{V_{d_1,d_2}}$.

\begin{lemma}\label{lem:2vertex}
    \begin{enumerate}
        \item If $d_1\geq d_2+2$, then we have $X_{V_{d_1,d_2}}=X_{V_{d_1-2,d_2}}$.\label{lem:2vertex1}
        \item We have $X_{V_{d_2+1,d_2}}=\prod_{i=1}^{d_2}(1-q^{2i})$.\label{lem:2vertex2}
    \end{enumerate}
\end{lemma}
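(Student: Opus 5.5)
The plan is to work directly with the numerator of $X_V$. Expanding the product as in the proof of Lemma \ref{lem:eqdim}, it becomes
\[\sum_{w\in W}(-1)^{\ell(w)}\sum_{I\subset S_+}(-q)^{|I|}t^{w\lambda_I},\qquad \lambda_I:=\rho-\sum_{\alpha\in I}\alpha,\]
with $W=S_{d_1}\times S_{d_2}$. This expression is $W$-antiinvariant, so dividing by the Weyl denominator expresses $X_V$ as $\sum_I(-q)^{|I|}$ times the straightened Weyl character attached to $\lambda_I$. That character is $0$ if $\lambda_I+$ (nothing) lies on a reflection hyperplane, i.e.\ if two $\b{e}^k$-coordinates of $\lambda_I$ coincide for some $k$, and otherwise it is $\pm\chi_\mu$. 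Both parts of the lemma then reduce to bookkeeping: for each $I$, control the coordinates of $\lambda_I$ in the $\b{e}^1_i$ and $\b{e}^2_j$ directions, as in Lemma \ref{lem:eqdim}.

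\textbf{Part \ref{lem:2vertex1}.} Assume $d_1\geq d_2+2$. From the description of $S_+$ we get the following.
\begin{itemize}[label={}]
\item Every weight $\b{e}^1_1-\b{e}^2_j$ lies in $S_+$, since $d_1-2>d_2-2j$.
\item Every weight $\b{e}^2_j-\b{e}^1_{d_1}$ lies in $S_+$.
\item No weight $\b{e}^2_j-\b{e}^1_1$ or $\b{e}^1_{d_1}-\b{e}^2_j$ lies in $S_+$.
\end{itemize}
So the $\b{e}^1_1$-coordinate of $\lambda_I$ is $\frac{d_1-1}{2}$ minus the number of arrows out of index $1$ that $I$ uses. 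Symmetrically, the $\b{e}^1_{d_1}$-coordinate is $-\frac{d_1-1}{2}$ plus a count.

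For a middle index $1<i<d_1$, I would bound the $\b{e}^1_i$-coordinate by counting, exactly as in Lemma \ref{lem:eqdim}, how many $S_+$-weights touch $\b{e}^1_i$ with each sign. I expect the range to be contained in $[-\frac{d_1-3}{2},\frac{d_1-3}{2}]$ or to be forced to collide with another coordinate. The goal is the following claim: whenever $I$ uses an arrow at index $1$ or at index $d_1$, either $\lambda_I$ lies on a wall, or the offending terms cancel in pairs.

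For the pairing, I would try a sign-reversing involution. It toggles the smallest $j$ such that $\b{e}^1_1-\b{e}^2_j\in I$ against a transposition in $S_{d_1}$. This changes $|I|$ by one and preserves $t^{w\lambda_I}$ up to the reflection sign.

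Granting the claim, the surviving $I$ avoid indices $1$ and $d_1$ entirely. The remaining weights are exactly $S_+$ for $V_{d_1-2,d_2}$ after relabeling $i\mapsto i-1$, because the condition $d_1-2i>d_2-2j$ is invariant under $(d_1,i)\mapsto(d_1-2,i-1)$. In addition, $\rho_1$ restricted to the middle indices equals $\rho$ for $\gl_{d_1-2}$. Freezing the two extreme coordinates at $\pm\frac{d_1-1}{2}$ then converts the $S_{d_1}$-antisymmetrization into an $S_{d_1-2}$-antisymmetrization, and the Weyl denominators match in the same way. This gives $X_{V_{d_1,d_2}}=X_{V_{d_1-2,d_2}}$.

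\textbf{Part \ref{lem:2vertex2}.} Assume $d_1=d_2+1$. Then $S_0=\varnothing$, and I would copy the proof of Lemma \ref{lem:eqdim}.

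First, a range argument shows that every coordinate of $\lambda_I$ in the $\b{e}^1$-block lies in $[-\frac{d_1-1}{2},\frac{d_1-1}{2}]$, and in the $\b{e}^2$-block in $[-\frac{d_2-1}{2},\frac{d_2-1}{2}]$. Hence each nonzero term has $\lambda_I\in W\rho$, and so $X_V$ is a multiple of the trivial character.

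Second, I compute the coefficient of $t^\rho$ by finding all pairs $(w,I)$ with $\lambda_I=w\rho$. For this I peel off the indices realizing the largest coordinate in each block, as in the $i_1^k,i_2^k,\dots$ argument. Here the two blocks have sizes $d_2+1$ and $d_2$, and the coordinate values interleave: $\frac{d_2}{2},\frac{d_2-1}{2},\frac{d_2-2}{2},\dots$. So the peeling alternates between the blocks rather than synchronizing them. Each step should force either a unique choice or a binary choice weighted by a sign and a power of $q^2$.

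\textbf{Main obstacle.} I expect the hardest step to be this final evaluation. One must show that the signed sum $\sum(-1)^{\ell(w)}(-q)^{|I|}$ factors as $\prod_{i=1}^{d_2}(1-q^{2i})$. If the direct evaluation is messy, my fallback is induction on $d_2$. Remove the index carrying the top coordinate $\frac{d_2}{2}$ in $V_1$ and the one carrying $\frac{d_2-1}{2}$ in $V_2$. Then show that the $d_2$ arrows at these indices contribute a factor $1-q^{2d_2}$, using the surviving $(w,I)$ pairs. This reduces the computation to $X_{V_{d_2,d_2-1}}$.
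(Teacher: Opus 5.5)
The gap is in part 2, at the step that evaluates the coefficient of $t^\rho$. This step is the heart of the lemma. The peeling argument of Lemma \ref{lem:eqdim} works there because each admissible $w$ has a \emph{unique} $I$ with $\lambda_I=w\rho$. For $V_{d_2+1,d_2}$ this uniqueness fails: many terms are individually nonzero and cancel only in pairs.

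Already for $d_2=2$, where $\rho=(1,0,-1;\tfrac12,-\tfrac12)$, take the sets $\{\b{e}^1_1-\b{e}^2_1,\ \b{e}^2_1-\b{e}^1_2\}$ and $\{\b{e}^1_1-\b{e}^2_2,\ \b{e}^2_1-\b{e}^1_2\}$. They give $\lambda_I=(0,1,-1;\tfrac12,-\tfrac12)$ and $(0,1,-1;-\tfrac12,\tfrac12)$. Both are regular, and they contribute $-q^2$ and $+q^2$. Altogether eight such terms cancel in this case. So the signed count of pairs $(w,I)$ does not factor step by step, and you need a cancellation mechanism. The missing idea is a sign-reversing involution that preserves $|I|$, defined on the sets $I$ that meet the extreme arrows only partially.

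The paper takes $I_0$ to be the $2d_2$ arrows at the $\b{e}^1$-indices $1$ and $d_2+1$, and splits the sum as follows:
\begin{itemize}
\item the sets with $I\cap I_0=\varnothing$ give $X_{V_{d_2,d_2-1}}$;
\item the sets with $I\supseteq I_0$ give $-q^{2d_2}X_{V_{d_2,d_2-1}}$, since $\lambda_{I_0}=(s_{1,d_2+1}\rho_1,\rho_2)$;
\item all remaining sets are paired off by swapping $\b{e}^1_1-\b{e}^2_j$ for $\b{e}^1_1-\b{e}^2_{j'}$. Here $j,j'$ is the minimal collision among the $\b{e}^2$-coordinates after deleting the arrows out of index $1$, so that $\lambda_{\sigma(I)}=s_{j,j'}\lambda_I$ with $s_{j,j'}\in S_{d_2}$.
\end{itemize}
Your fallback, removing the top index of each block, does yield $(1-q^{2d_2})X_{V_{d_2,d_2-1}}$ from its two extreme cases. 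That set consists of $2d_2$ arrows, not $d_2$. But the fallback faces exactly the same partially-meeting terms, and you give no way to cancel them.

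Separately, your claimed range $[-\tfrac{d_2-1}{2},\tfrac{d_2-1}{2}]$ for the $\b{e}^2$-coordinates is false. A priori they lie only in $[-\tfrac{d_2+1}{2},\tfrac{d_2+1}{2}]$; for example, with $d_2=1$ the set $I=\{\b{e}^1_1-\b{e}^2_1\}$ gives $\b{e}^2$-coordinate $1$. You must show that these extreme values force a collision in the $\b{e}^1$-block.

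In part 1, your proposed involution cannot work, for two reasons. Toggling the arrow $\b{e}^1_1-\b{e}^2_j$ changes $|I|$ by one, so the paired terms carry different powers of $q$ and cannot cancel. It also moves the $\b{e}^2_j$-coordinate, so the two weights are not related by a transposition in $S_{d_1}$.

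Fortunately no involution is needed there. All $\b{e}^1$-coordinates of $\lambda_I$ lie in $[-\tfrac{d_1-1}{2},\tfrac{d_1-1}{2}]$, which contains exactly $d_1$ values of the right parity. Hence a regular $\lambda_I$ must attain both $\pm\tfrac{d_1-1}{2}$. Only index $1$ can attain the maximum and only index $d_1$ the minimum, so $I$ must avoid every arrow at those two indices. Every other $I$ gives a $\lambda_I$ on a wall, with no cancellation required. With this pigeonhole step in place, your relabeling argument finishes part 1 as in the paper.
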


\begin{proof}
    \ref{lem:2vertex1}.
    
    We use many of the same ideas as in the proof of Lemma \ref{lem:eqdim}. Let $\lambda_I:=\rho-\sum_{\alpha\in I}\alpha$ for $I\subset S_+$ as in that proof, and note that as before, we only need to consider terms corresponding to subsets $I$ with $\lambda_I$ not on any reflection hyperplane by $W$-antiinvariance. We may also bound the values $\langle\lambda_I,\b{e}_i^k\rangle$ using the same method as in Lemma \ref{lem:eqdim}. We obtain the following bounds:
    
    \begin{align*}
        i\leq\frac{d_1-d_2+1}{2}:\quad&\langle\lambda_I,\b{e}_i^1\rangle\in\Big[\frac{d_1+1-2i-2d_2}{2},\frac{d_1+1-2i}{2}\Big]\\
        \frac{d_1-d_2+1}{2}<i<\frac{d_1+d_2+1}{2}:\quad&\langle\lambda_I,\b{e}_i^1\rangle\in\Big[\frac{-d_2+1}{2},\frac{d_2-1}{2}\Big]\\
        i\geq\frac{d_1+d_2+1}{2}:\quad&\langle\lambda_I,\b{e}_i^1\rangle\in\Big[\frac{d_1+1-2i}{2},\frac{d_1+1-2i+2d_2}{2}\Big].
    \end{align*}
    In particular, $\langle\lambda_I,\b{e}_i^1\rangle$ is always in the range $[\frac{-d_1+1}{2},\frac{d_1-1}{2}]$, so if $\lambda_I$ does not lie on any reflection hyperplane then the set of numbers $\langle\lambda_I,\b{e}_i^1\rangle$ must be exactly the set of (half) integers in the range $[\frac{-d_1+1}{2},\frac{d_1-1}{2}]$. So $\lambda_I\in W\rho$. In addition, the only $i$ for which we can have $\langle\lambda_I,\b{e}_i^1\rangle=\frac{d_1-1}{2}$ is $i=1$, and the only $i$ for which we can have $\langle\lambda_I,\b{e}_i^1\rangle=\frac{-d_1+1}{2}$ is $i=d_1$. So we must have these equalities, which is only possible if $\b{e}_1^1-\b{e}_j^2,\b{e}_j^2-\b{e}_{d_1}^1\not\in I$ for all $j$. 

    Note that by $W$-antiinvariance, it suffices to consider the coefficient of $t^\rho$ in the expression $\sum_{w\in W}(-1)^{\ell(w)}t^{w\rho}\prod_{\alpha\in S_+}(1-qt^{-w\alpha})$. By the above discussion, only $w$ that fix $\b{e}_1^1,\b{e}_{d_1}^1$ can contribute to this coefficient. Moreover, the only $I$ we have to worry about do not involve any elements of $S_+$ that contain any component in $\b{e}_1^1,\b{e}_{d_1}^1$. Embedding the root system of $V_{d_1-2,d_2}$ inside $V_{d_1,d_2}$ via $\b{e}_i^1\mapsto\b{e}_{i+1}^1,\b{e}_j^2\mapsto\b{e}_j^2$, we thus obtain that all terms contributing to $t^\rho$ come from terms where $w$ comes from the Weyl group of $V_{d_1-2,d_2}$ and the weights $\alpha$ come from the $S_+$ of $V_{d_1-2,d_2}$. Thus the coefficients of $t^\rho$ in the numerators of $X_{V_{d_1,d_2}},X_{V_{d_1-2,d_2}}$ are the same, and we obtain $X_{V_{d_1,d_2}}=X_{V_{d_1-2,d_2}}$.

    \ref{lem:2vertex2}.

    We proceed via induction on $d_2$. The base case $d_2=0$ is clear. Let $I_0\subset S_+$ be the collection of all $\b{e}_1^1-\b{e}_j^2,\b{e}_j^2-\b{e}_{d_2+1}^1$ for $1\leq j\leq d_2$. Note that for the coefficient of $t^\rho$, we have contributions from $I$ such that $I\cap I_0=\varnothing$, which contributes $X_{V_{d_2-1,d_2}}=X_{V_{d_2,d_2-1}}$ to $X_{V_{d_2+1,d_2}}$ as in the previous part, and we also have contributions from $I$ that contain all of $I_0$. Note that $\lambda_{I_0}=(s_{1,d_2+1}\rho_1,\rho_2)$, where $s_{1,d_2+1}\in W_{d_2+1}$ is the transposition swapping $1$ and $d_2+1$. The $I$ that contain $I_0$ are also in bijection with subsets of the $S_+$ of $V_{d_2-1,d_2}$. We have $|I_0|=2d_2$, so terms here contribute $-q^{2d_2}X_{V_{d_2,d_2-1}}$ to $X_{V_{d_2+1,d_2}}$, where the minus sign comes from the fact that the element of the Weyl group for $V_{d_2-1,d_2}$ has to be multiplied by $s_{1,d_2+1}$. Observe that the sum of the contributions from these two cases is exactly $(1-q^{2d_2})X_{V_{d_2,d_2-1}}$, which by induction is exactly the claimed value of $X_{V_{d_2+1,d_2}}$.

    It remains to show that the net contribution of all other $I\subset S_+$ to $X_{V_{d_2+1,d_2}}$ is zero. Observe as in part \ref{lem:2vertex1} that $\langle\lambda_I,\b{e}_i^1\rangle$ is always in the range $[\frac{-d_2}{2},\frac{d_2}{2}]$, so if $\lambda_I$ does not lie on any reflection hyperplane then the set of numbers $\langle\lambda_I,\b{e}_i^1\rangle$ must be the set of (half) integers in the range $[\frac{-d_2}{2},\frac{d_2}{2}]$. Applying similar bounds to $\langle\lambda_I,\b{e}_j^2\rangle$ yields that these values are always in the range $[\frac{-d_2-1}{2},\frac{d_2+1}{2}]$. However, if $\langle\lambda_I,\b{e}_j^2\rangle=\frac{d_2+1}{2}$ for some $j$, then we must have $\b{e}_i^1-\b{e}_j^2\in I$ for $i\leq j$ and $\b{e}_j^2-\b{e}_i^1\not\in I$ for $i>j$. In particular, this makes it impossible for there to be some $i$ with $\langle\lambda_I,\b{e}_i^1\rangle=\frac{d_2}{2}$, so these $\lambda_I$ always lie on a reflection hyperplane. Similarly, if there is some $j$ with $\langle\lambda_I,\b{e}_j^2\rangle=\frac{-d_2-1}{2}$, then $\lambda_I$ lies on a reflection hyperplane. So we reduce to the case where $\langle\lambda_I,\b{e}_j^2\rangle\in[\frac{-d_2+1}{2},\frac{d_2-1}{2}]$. In particular, the values $\langle\lambda_I,\b{e}_j^2\rangle$ are exactly the (half) integers in $[\frac{-d_2+1}{2},\frac{d_2-1}{2}]$, so we obtain that $\lambda_I\in W\rho$.

    Now, we will produce an involution $\sigma$ on the collection of sets $I$ with $I\cap I_0\not\in\{\varnothing,I_0\}$ with $\lambda_I\in W\rho$ such that $|\sigma I|=|I|$. Moreover, if $\lambda_I=w\rho$ with $(-1)^{\ell(w)}=1$, then if $w'$ is such that $\lambda_{\sigma(I)}=w'\rho$ then $(-1)^{\ell(w')}=-1$. Note that this will imply the lemma since then the involution establishes a pairing between terms contributing to the coefficient of $t^\rho$ in the numerator of $X_{V_{d_2+1,d_2}}$ that pairs a term with its negative. 

    Let $I$ be such a set, and let $I_0^-=\{\b{e}_1^1-\b{e}_j^2\},I_0^+=\{\b{e}_j^2-\b{e}_{d_2+1}^1\}$, so that $I_0=I_0^-\sqcup I_0^+$. First, consider when $I\cap I_0^-\not\in\{\varnothing,I_0^-\}$. Then let $I'=I\setminus (I\cap I_0^-)$. Then note that for all $1\leq j\leq d_2$ we have
    \begin{equation}
        \langle\lambda_{I'},\b{e}_j^2\rangle=\langle\lambda_I,\b{e}_j^2\rangle-1_{\b{e}_1^1-\b{e}_j^2\in I}.\label{eq:2vertex}
    \end{equation}
    Moreover, note that since $I\cap I_0^-\neq I_0^-$, we have $\langle\lambda_I,\b{e}_1^1\rangle\neq\frac{-d_2}{2}$, so there must be some $i>1$ with $\langle\lambda_I,\b{e}_i^1\rangle=\frac{-d_2}{2}$. In particular, for all $1\leq j\leq d_2$, if $j\geq i$ then $\b{e}_i^1-\b{e}_j^2\in I$ and if $j<i$ then $\b{e}_j^2-\b{e}_i^1\not\in I$. Then the same is true of $I'$, which in particular implies $\langle\lambda_{I'},\b{e}_j^2\rangle\geq\frac{-d_2+1}{2}$ for all $j$. Now, let $j_0$ be any index such that $\b{e}_1^1-\b{e}_{j_0}^2\in I$. Then among the values $\langle\lambda_I,\b{e}_j^2\rangle$ for $1\leq j\leq d_2$, there are $\langle\lambda_I,\b{e}_{j_0}^2\rangle-\frac{-d_2+1}{2}+1$ of them that are at most $\langle\lambda_I,\b{e}_{j_0}^2\rangle$. In particular, since $\langle\lambda_{I'},\b{e}_{j_0}^2\rangle=\langle\lambda_I,\b{e}_{j_0}^2\rangle-1$, among the values $\langle\lambda_{I'},\b{e}_j^2\rangle$ there are $\langle\lambda_I,\b{e}_{j_0}^2\rangle-\frac{-d_2+1}{2}+1$ that are less than $\langle\lambda_I,\b{e}_{j_0}^2\rangle$. As these values are also bounded below by $\frac{-d_2+1}{2}$, by the Pigeonhole Principle there must exist two indices $j,j'$ with $\langle\lambda_{I'},\b{e}_j^2\rangle=\langle\lambda_{I'},\b{e}_{j'}^2\rangle$. Moreover, since the values $\langle\lambda_I,\b{e}_j^2\rangle$ are distinct, by \eqref{eq:2vertex}, for any $k$ there can be at most two values of $j$ with $\langle\lambda_{I'},\b{e}_j^2\rangle=k$. Now, let us choose the minimal $k$ with two such values, and let these values be $j,j'$. Then exactly one of $\b{e}_1^1-\b{e}_j^2,\b{e}_1^1-\b{e}_{j'}^2$ is in $I$, so without loss of generality say it is $\b{e}_1^1-\b{e}_j^2$. Then set $\sigma(I)=I\setminus\{\b{e}_1^1-\b{e}_j^2\}\cup\{\b{e}_1^1-\b{e}_{j'}^2\}$. We clearly have $|\sigma(I)|=|I|$. Moreover, we have $\lambda_{\sigma(I)}=s_{j,j'}\lambda_I$, where $s_{j,j'}\in S_{d_2}$ is the transposition swapping $j,j'$. In particular, if $\lambda_I=w\rho$ then $\lambda_{\sigma(I)}=s_{j,j'}w\rho$, and we have $(-1)^{\ell(s_{j,j'}w)}=-(-1)^{\ell(w)}$. Lastly, $\sigma$ is an involution, as we have $\sigma(I)'=I'$ (where we define $\sigma(I)'$ using the same procedure we used to define $I'$).

    Finally, consider when $I\cap I_0^-\in\{\varnothing,I_0^-\}$. Let us say $I\cap I_0^-=\varnothing$, the other case is similar. Then by assumption we cannot have $I\cap I_0^+=\varnothing$, as then $I\cap I_0=\varnothing$. Moreover, if $I\cap I_0^+=I_0^+$ then we obtain $\langle\lambda_I,\b{e}_1^1\rangle=\langle\lambda_I,\b{e}_{d_2+1}^1\rangle=\frac{d_2}{2}$, so $\lambda_I$ lies on a reflection hyperplane. So we have $I\cap I_0^+\not\in\{\varnothing,I_0^+\}$, and we may apply an analogous construction as above to define the involution $\sigma$ in this case.
\end{proof}

Lemma \ref{lem:2vertex} \ref{lem:2vertex1}.~reduces the proof of Theorem \ref{thm} in this case to the cases $d_1=d_2$ and $d_1=d_2+1$. The former case is exactly the $l=2$ case of Lemma \ref{lem:eqdim}, and the latter case is Lemma \ref{lem:2vertex} \ref{lem:2vertex2}.

\begin{remark}\label{rem:>2}
    If one tries to replicate this proof for a representation of a cyclic quiver with more than 2 vertices, the bounds on $\langle\lambda_I,\b{e}_i^k\rangle$ from the second paragraph of the proof of Lemma \ref{lem:2vertex} \ref{lem:2vertex2}.~do not hold, as you can have contributions to $\langle\lambda_I,\b{e}_i^k\rangle$ from weights involving both the $k-1$-st and $k+1$-st vertices, and these two are no longer the same. For example, for the cyclic quiver with $4$ vertices and dimension vector $(3,2,3,2)$, there exist $I$ for which
    \[\lambda_I=\left(\rho_1,\left(\frac{3}{2},\frac{1}{2}\right),w\rho,\left(-\frac{1}{2},-\frac{3}{2}\right)\right)\]
    where $w$ is any $3$-cycle. So we do not have $\lambda_I\in W\rho$. One can check that the terms corresponding to these $\lambda_I$ do not cancel in the numerator of $X_V$, so \eqref{eq:id} cannot hold as the left hand side does not lie in $\C(q)$.
\end{remark}

\subsection{Case where $\g=\prod\sl_2$}\label{sec:sl2}

Consider when $\g$ is a (finite) product of copies of $\sl_2$. We will provide a class of cofree representations for which \eqref{eq:id} holds, which we conjecture are a complete list of such representations when $\g$ is a product of copies of $\sl_2$. To this end, we introduce a graphical representation based on quiver representations under which our class of representations falls.

Let $A=(X,Y)$ be an ordered pair where $X$ is a finite set and $Y$ is a finite {\emph{multiset}} consisting of elements of $X$, unordered pairs of elements of $X$, and unordered triples of distinct elements of $X$. For example, one could have $X=\{1,2,3,4\}$ and $Y=\{1,1,\{3,3\},\{2,4\},\{1,3,4\}\}$. We may represent such an $A$ via a diagram, where

\begin{enumerate}
    \item Vertices correspond to elements of $X$.
    \item For each element $a\in Y$ with $a\in X$, we put a framing on the vertex, represented by an edge from the vertex to empty space.
    \item For each pair $\{a,b\}\in Y$ with $a,b\in X$, we draw an edge between $a$ and $b$. If $a=b$, this is a self-loop.
    \item For each triple $\{a,b,c\}\in Y$ with $a,b,c\in X$ pairwise distinct, we draw a filled triangle between $a,b,c$, which we will call a $3$-hyperedge.
\end{enumerate}

We call such a diagram an \textbf{extended quiver}, which we will motivate below. For example, for the $X,Y$ in the previous paragraph, we would have the following diagram:

\begin{figure}[H]
    \centering
    \begin{tikzpicture}
        \tikzset{every loop/.style={min distance=10mm, in=-30, out=60}}
        
        \node [below] at (0,0) {$1$};
	\draw [fill] (0,0) circle [radius=0.05];
        \node [left] at (0.5,0.8) {$3$};
	\draw [fill] (0.5,0.8) circle [radius=0.05];
        \node [below] at (1,0) {$4$};
	\draw [fill] (1,0) circle [radius=0.05];
        \node [right] at (2,0) {$2$};
	\draw [fill] (2,0) circle [radius=0.05];
        \draw (1,0)--(2,0);
        \draw (0,0)--(-1,0);
        \draw (0,0)--(-0.5,0.8);
        \draw (0,0)--(0.5,0.8)--(1,0)--(0,0);
        \draw (0.5,0.8) edge [loop above] node {} (0.5,0.8);
        \fill[gray, opacity=0.3] (0,0)--(1,0)--(0.5,0.8)--(0,0);
    \end{tikzpicture}{\caption{an extended quiver}\label{extquiver}}
\end{figure}

Attached to such an $A$ we also have a representation $V_A$ of $\prod_{a\in X}(\sl_2)_a$. Let $V_1$ be the tautological representation of $\sl_2$ and $V_d=S^dV_1$ for $d\geq 1$. Then, the irreducible components of $V_A$ correspond to elements of $Y$, and are as follows:

\begin{enumerate}
    \item For $a\in Y$: the tautological representation $V_1$ of $(\sl_2)_a$,
    \item For $\{a,a\}\in Y$: the representation $V_2$ of $(\sl_2)_a$,
    \item For $\{a,b\}\in Y$ with $a\neq b$: the representation $V_1\boxtimes V_1$ of $(\sl_2)_a\times(\sl_2)_b$,
    \item For $\{a,b,c\}\in Y$: the representation $V_1\boxtimes V_1\boxtimes V_1$ of $(\sl_2)_a\times(\sl_2)_b\times(\sl_2)_c$.
\end{enumerate}
Note that if there are no triples in $Y$, then this is a framed quiver representation as seen in the theory of quiver varieties (see \cite[Section 3.1]{ginzburg2009lecturesonnakajimaquivervarieties}), where the dimension of the vector space at each vertex is $2$ and the dimension of the framing at each vertex is the multiplicity of that vertex in $Y$. This is the motivation for the name extended quiver, and we will call the corresponding representations \textbf{extended quiver representations}. If $A$ is an extended quiver, let $V(A)$ be the corresponding representation.

We say an extended quiver is of \textbf{trivial type} if it is one of the following (note the recursive definition):

\begin{enumerate}
    \item A single vertex, a vertex with a framing, a path graph with a single framing on each terminal vertex (a path of length $1$ has two framings on the sole vertex), an $l$-cycle (a $1$-cycle is a self-loop), or a $3$-hyperedge,
    \item An extended quiver of trivial type with a leaf attached to one of its vertices,
    \item A disjoint union of extended quivers of trivial type.
\end{enumerate}

For example, the following extended quiver is of trivial type, while the one in Figure \ref{extquiver} is not.

\begin{figure}[H]
    \centering
    \begin{tikzpicture}
	\draw [fill] (0,0) circle [radius=0.05];
	\draw [fill] (0.5,0.8) circle [radius=0.05];
	\draw [fill] (1,0) circle [radius=0.05];
	\draw [fill] (2,0) circle [radius=0.05];
        \draw [fill] (-1,0) circle [radius=0.05];
        \draw [fill] (3,0) circle [radius=0.05];
        \draw [fill] (2.5,0.8) circle [radius=0.05];
        \draw (1,0)--(2,0)--(3,0);
        \draw (2,0)--(2.5,0.8);
        \draw (0,0)--(-1,0);
        \draw (0,0)--(0.5,0.8)--(1,0)--(0,0);
        \fill[gray, opacity=0.3] (0,0)--(1,0)--(0.5,0.8)--(0,0);

        \draw [fill] (5,0) circle [radius=0.05];
        \draw [fill] (5,1) circle [radius=0.05];
        \draw (4,0)--(6,0);
        \draw (5,0)--(5,1);
    \end{tikzpicture}{\caption{an extended quiver of trivial type}\label{extquivertriv}}
\end{figure}

An extended quiver of trivial type arising from the first part of the definition will be called a \textbf{basic extended quiver}. An extended quiver containing no $l$-cycles is called \textbf{acyclic}.

Analogously, we also say an extended quiver representation is of trivial type if its corresponding extended quiver is of trivial type, and we call the representations corresponding to basic extended quivers \textbf{basic extended quiver representations}.

\begin{lemma}\label{lem:cofree}
    Let $A,A_1,A_2$ be extended quivers of trivial type. Let $A'$ be obtained from $A$ by adding a leaf.
    
    \begin{enumerate}
        \item All basic extended quiver representations are cofree.\label{lem:cofree1}
        \item If $V(A)$ is cofree, then $V(A')$ is cofree.\label{lem:cofree2}
        \item If $V(A_1),V(A_2)$ are cofree, then $V(A_1\sqcup A_2)$ is cofree.\label{lem:cofree3}
    \end{enumerate}
\end{lemma}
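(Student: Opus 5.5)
The plan is to reduce all three parts to one standard criterion. Let $V$ be a representation with $\C[V]^G$ a polynomial algebra on homogeneous generators $f_1,\dots,f_r$. Since $\C[V]$ is Cohen--Macaulay, $\C[V]$ is free over $\C[V]^G$ if and only if $f_1,\dots,f_r$ is a regular sequence. This happens if and only if the zero locus $\N_V$ has codimension $r$ in $V$, i.e.\ $\dim\N_V=\dim V-r$ (the graded Hironaka criterion; see \cite{book}, Chapter 8). So in each case I will (a) identify $\C[V]^G$ as a polynomial ring and (b) compute the dimension of the nullcone.

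\textbf{Part \ref{lem:cofree3}.} This is immediate. We have $\C[V(A_1\sqcup A_2)]=\C[V(A_1)]\otimes\C[V(A_2)]$ with the group $G_1\times G_2$ acting factorwise. Hence the invariants are $\C[V(A_1)]^{G_1}\otimes\C[V(A_2)]^{G_2}$, and a tensor product of free modules over the respective rings is free over the tensor product ring.

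\textbf{Part \ref{lem:cofree2}.} Let the new leaf $b$ be joined to the vertex $a$ by an edge, contributing $M\in V_1\boxtimes V_1=\Hom(\C^2_b,\C^2_a)$. First take $(SL_2)_b$-invariants. By Cauchy's formula, $S^k(\C^2\otimes\C^2)=\bigoplus_\lambda V_\lambda\boxtimes V_\lambda$, and the $(SL_2)_b$-invariants come only from $\lambda=(j,j)$. These summands are trivial for $(SL_2)_a$ and are spanned by $(\det M)^j$. Therefore
\[\C[V(A')]^{G'}=\bigl(\C[V(A)]\otimes\C[\det M]\bigr)^{G_A}=\C[V(A)]^{G_A}[\det M],\]
which is polynomial with one more generator than $\C[V(A)]^{G_A}$. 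The common zero locus of these generators is $\N_{V(A)}\times\{\det M=0\}$. Its codimension is therefore $\operatorname{codim}\N_{V(A)}+1$, which equals the new number of generators, so the criterion gives freeness.

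\textbf{Part \ref{lem:cofree1}.} I would treat the basic quivers case by case.
\begin{itemize}
\item The single vertex is $V=0$. The framed vertex is $V_1$ for $SL_2$, which has no nonconstant invariants, so freeness over $\C$ is trivial.
\item The self-loop is the adjoint representation $V_2$ of $\sl_2$, which is cofree by Kostant.
\item The $3$-hyperedge is $\C^2\otimes\C^2\otimes\C^2$ under $SL_2^3$. Its invariant ring is generated by Cayley's hyperdeterminant, and a single nonzero homogeneous polynomial is always a regular sequence.
\item For the path with framings $u,w$ at its two ends and edge matrices $M_1,\dots,M_{n-1}$, the invariants should be the $\det M_i$ together with the pairing $\omega(u,M_1\cdots M_{n-1}w)$. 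I would verify this by applying the Cauchy argument vertex by vertex, as in Part \ref{lem:cofree2}. The path can also be built from a single framed edge by repeatedly adding leaves, which again fits Part \ref{lem:cofree2}.
\item For the $l$-cycle with matrices $M_1,\dots,M_l$, the candidate generators are $\det M_1,\dots,\det M_l$ and $\operatorname{tr}(M_1\cdots M_l)$. Equivalently, one can view this as the $GL_2$ equal-dimension cyclic quiver restricted to $SL_2^l$, and deduce the result from Vinberg's theory of $\Z/l\Z$-graded Lie algebras together with the extra determinant invariants.
\end{itemize}

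The main obstacle I expect is the nullcone dimension count for the cycle, and to a lesser extent for the path. There one must show $\dim\N_V=\dim V-(l+1)$, respectively $\dim V-n$. My first attempt would be direct. On the locus where the determinants vanish, every $M_i$ has rank at most one, say $M_i=x_iy_i^T$. Then the trace condition becomes $\prod_i\omega(y_i,x_{i+1})=0$ (up to the identification of $\C^2$ with its dual via $\omega$), i.e.\ one of the factors vanishes. I would parametrize the resulting components and count their dimensions explicitly. If this becomes messy, the fallback is to bound $\dim\N_V$ through the Hilbert--Mumford description of $\N_V$ as $G\cdot V_{>0}(\lambda)$ for finitely many one-parameter subgroups $\lambda$, using the estimate $\dim G/P_\lambda+\dim V_{>0}(\lambda)$.
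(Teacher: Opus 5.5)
Your plan is sound, and for Parts~2 and~3 it agrees with the paper on how the invariants are identified. The paper also observes that only the $V_0\boxtimes V_0$ summands of $\C[V'']$ (the powers of $\det M$) can contribute invariants, so $\C[V']^G=\C[V]^G\otimes\C[\det M]$.

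The real difference is how you get \emph{freeness}. The paper never uses the regular-sequence/Hironaka criterion. In Part~2 it notes that $\C[V'']$ is a torsion-free graded module over the PID $\C[\det M]$, hence free, and that a tensor product of free modules is free over the tensor product of the base rings. That is more elementary than your codimension count, though both work.

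In Part~1 the difference matters more. For the framed path and the $l$-cycle, the paper counts trivial summands in $\bigotimes_k\C[(V_1)_k\boxtimes(V_1)_{k+1}]$ (with the framing factors) to show $\C[V]^G$ is polynomial, and stops there. Coregularity alone does not imply cofreeness, so your nullcone-codimension check is exactly the step that finishes these cases. Your route is the more complete one here. For the $3$-hyperedge the paper simply cites Littelmann; your observation that a single nonzero homogeneous invariant is a regular sequence is a self-contained alternative.

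There are three points to fix.

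\textbf{The framed path cannot be built from leaves.} Your aside that the doubly framed path comes from a framed edge by adding leaves is false. A leaf is an unframed vertex, so leaf additions never create framings at both ends; that is why this path is a basic quiver. Your vertex-by-vertex route does work, but not literally as in Part~2. At a terminal vertex with framing $u$ and edge $M_1$, the first fundamental theorem for $SL_2$ (one vector, two covectors, no relations) gives $\C[u,M_1]^{(SL_2)_1}=\C[\det M_1]\otimes\C[M_1u]$. This reduces the problem to a shorter path whose new framing is $M_1u$.

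\textbf{The Vinberg remark does not give the $SL_2^l$-invariants of the cycle.} Vinberg describes the $GL_2^l$-invariants, namely the coefficients of the characteristic polynomial of the cyclic product. The $SL_2^l$-invariants are $GL_2^l$-semi-invariants, and cofreeness for the larger group says nothing about the smaller one. Here is a direct argument. Take $M_l\colon\C^2_l\to\C^2_1$ and $M_1\colon\C^2_1\to\C^2_2$. The $(SL_2)_1$-invariants are generated by $a=\det M_l$, $b=\det M_1$ and the entries of $N=M_1M_l$, with the single relation $ab=\det N$. This ring is free over $\C[N]$ on $\{a^i\}_{i\geq 0}\cup\{b^j\}_{j\geq 1}$. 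That reduces the $l$-cycle to the $(l-1)$-cycle, with base case $\operatorname{End}(\C^2)$ under conjugation.

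\textbf{The nullcone count is easy.} The step you expect to be the main obstacle needs no parametrization or Hilbert--Mumford estimate. Let $Z$ be the locus where all edge determinants vanish, times the framing spaces. It is a product of quadric cones and affine spaces, so it is irreducible, of codimension equal to the number of edges. The remaining invariant (the trace of the cyclic product, or $\omega(u,M_1\cdots M_{n-1}w)$) is not identically zero on $Z$: take every $M_i=E_{11}$. By Krull's principal ideal theorem, every component of $\N_V=Z\cap\{f=0\}$ then has codimension one in $Z$. This gives exactly the codimension your criterion requires.
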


\begin{proof}
We prove \ref{lem:cofree2}.~first.

\ref{lem:cofree2}.

Write $V=V(A),V'=V(A')$, and let $V''\cong V_1\boxtimes V_1$ be the representation corresponding to the edge connecting the leaf to $A$, so that $V'=V\oplus V''$. Let $\g'\cong\sl_2$ be the copy of $\sl_2$ corresponding to the leaf. A simple calculation of the dimensions of weight spaces of $\C[V'']_d$ shows that we have
\[\C[V'']_d\cong V_d\boxtimes V_d\oplus V_{d-2}\boxtimes V_{d-2}\oplus V_{d-4}\boxtimes V_{d-4}\oplus\cdots.\]
In particular, irreducible representations in $\C[V'']$ are all of the form $V_i\boxtimes V_i$ for some $i\geq 0$. Now, note that we have a decomposition $\C[V']=\C[V]\otimes\C[V'']$, so any copy of the trivial representation inside $\C[V']$ must be a direct summand of $M\otimes (V_i\boxtimes V_i)$ for some irreducible representation $M$ in $\C[V]$. But since $\g'$ acts trivially on $V$, the action of $\g'$ on $M\otimes (V_i\boxtimes V_i)$ will always be via a direct sum of copies of $V_i$. In particular, for there to be a trivial representation inside $M\otimes (V_i\boxtimes V_i)$ we must have $i=0$. But then we have $M\otimes (V_i\boxtimes V_i)\cong M\otimes\C\cong M$, so we must also have $M\cong\C$. It follows that we have $\C[V']^G=\C[V]^G\otimes\C[V'']^G$.

Note that from the decomposition of $\C[V'']_d$ above we see that $\C[V'']^G$ is a polynomial algebra in one variable in degree 2, so in particular is a PID. Then as a graded torsion-free module with finite-dimensional graded components over a PID, $\C[V'']$ is free, hence $V''$ is cofree. So $\C[V]$ and $\C[V'']$ are free over $\C[V]^G,\C[V'']^G$, respectively, so it follows that $\C[V']=\C[V]\otimes\C[V'']$ is free over $\C[V]^G\otimes\C[V'']^G=\C[V']^G$, so $V'$ is cofree.

\ref{lem:cofree1}.

The single vertex corresponds to the zero representation of $\sl_2$, for which we have $\C[V]^G=\C[V]$. So this is cofree. A vertex with a framing corresponds to the representation $V=V_1$ of $\sl_2$, and it is well-known that $\C[V]$ decomposes as $\C\oplus V_1\oplus V_2\oplus\cdots$ as a representation of $\sl_2$, so $\C[V]^G=\C$ and $V$ is cofree.

For a path of length $l$ with framings on each terminal vertex, let us first compute $\dim_q\C[V]^G$. Let $(V_i)_k$ be the representation on which the $k$-th copy of $\sl_2$ acts by $V_i$ and all other copies of $\sl_2$ act trivially. Then from the proof of Lemma \ref{lem:cofree} \ref{lem:cofree2}.~we have that
\[\ch_q\C[(V_1)_k\otimes(V_1)_{k+1}]=\frac{1}{1-q^2}\sum_{d\geq 0}\ch((V_d)_k\otimes(V_d)_{k+1})q^d.\]
On the other hand we have $\ch_q\C[(V_1)_k]=\sum_{d\geq 0}\ch(V_d)_kq^d$. Note that $V$ is the direct sum of the representations $(V_1)_1,(V_1)_1\otimes (V_1)_2,\dots,(V_1)_{l-1}\otimes (V_1)_l,(V_1)_l$. Then it is clear that for each $d\geq 0$, there is a copy of $\C$ in $\C[V]$ for each representation of the form
\[(V_d)_1\otimes((V_d)_1\otimes(V_d)_2)\otimes\cdots\otimes((V_d)_{l-1}\otimes (V_d)_l)\otimes (V_d)_l\]
inside $\C[(V_1)_1]\otimes\C[(V_1)_1\otimes (V_1)_2]\otimes\cdots$, and that these are the only copies of $\C$ that are in $\C[V]$. It follows that $\C[V]^G$ is freely generated by the generators of $\C[(V_d)_k\otimes(V_d)_{k+1}]^G$ for $1\leq k\leq l-1$ and the lowest degree copy of $\C$ in
\[(V_1)_1\otimes((V_1)_1\otimes(V_1)_2)\otimes\cdots\otimes((V_1)_{l-1}\otimes (V_1)_l)\otimes (V_1)_l.\]

For the $l$-cycle with $l>1$, we have
\[V=\bigoplus_{k=1}^l(V_1)_k\boxtimes(V_1)_{k+1}\]
where we define $(V_1)_{l+1}=(V_1)_1$. A similar computation to the previous case shows that $\C[V]^G$ is freely generated by the generators of $\C[(V_d)_k\otimes(V_d)_{k+1}]^G$ and the lowest copy of $\C$ in
\[((V_1)_1\otimes(V_1)_2)\otimes\cdots\otimes((V_1)_{l-1}\otimes (V_1)_l)\otimes((V_1)_l\otimes(V_1)_1).\]

For the $3$-hyperedge, it was proved in \cite{littelmann} that $V$ is cofree.

\ref{lem:cofree3}.

Let $V_1=V(A_1),V_2=V(A_2),V=V_1\oplus V_2$. Let $\g_1,\g_2$ be the direct sum of all copies of $\sl_2$ corresponding to vertices in $A_1,A_2$, respectively. Then any copy of the trivial representation inside $\C[V]$ is a direct summand of $M_1\otimes M_2$ for some irreducible representations $M_1,M_2$ in $\C[V_1],\C[V_2]$, respectively. But $\g_1$ acts trivially on $M_2$, so if it acts trivially on some summand of $M_1\otimes M_2$ then it must act trivially on $M_1$. Similarly, $\g_2$ must act trivially on $M_2$. It follows that $M_1\cong M_2\cong\C$, so we have $\C[V]^G=\C[V_1]^G\boxtimes\C[V_2]^G$. Then $\C[V]=\C[V_1]\boxtimes\C[V_2]$ is the tensor product of two free modules, hence is free over $\C[V]^G$.
\end{proof}

\begin{lemma}\label{lem:trivtype}
    Let $A,A_1,A_2$ be extended quivers of trivial type. Let $A'$ be obtained from $A$ by adding a leaf.
    
    \begin{enumerate}
        \item All basic extended quiver representations satisfy \eqref{eq:id}.\label{lem:trivtype1}
        \item If $V(A)$ satisfies \eqref{eq:id}, then $V(A')$ satisfies \eqref{eq:id}.\label{lem:trivtype2}
        \item If $V(A_1),V(A_2)$ satisfy \eqref{eq:id}, then $V(A_1\sqcup A_2)$ satisfies \eqref{eq:id}.\label{lem:trivtype3}
    \end{enumerate}
\end{lemma}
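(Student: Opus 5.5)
The plan is to verify \eqref{eq:id}, i.e. $X_V=\dim_q\C[V^{\rho^\vee}]^\h/\dim_q\C[V]^G$, directly in each case. I use the coordinates in which $W=\{\pm1\}^X$, the $\sl_2$-weights of $V_1$ are $\pm1$, and $\rho=\rho^\vee=(1,\dots,1)$. Then $\langle\alpha,\rho^\vee\rangle$ is simply the sum of the coordinates of $\alpha$.

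As in the proof of Lemma \ref{lem:eqdim}, the numerator of $X_V$ is $W$-antiinvariant. Hence $X_V$ equals the coefficient of $t^\rho$ in the numerator. For $I\subset S_+$ put $\lambda_I=\rho-\sum_{\alpha\in I}\alpha$. A pair $(w,I)$ contributes to the coefficient of $t^\rho$ exactly when $w\lambda_I=\rho$, and then it contributes $(-1)^{\ell(w)}(-q)^{|I|}$. Equivalently, only those $I$ with every coordinate of $\lambda_I$ in $\{\pm1\}$ matter. Such an $I$ contributes $(-1)^{\#\{\text{coordinates equal to }-1\}}(-q)^{|I|}$.

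\textbf{Part \ref{lem:trivtype3}.} All weights of $V(A_1)$ are supported on the coordinates of $A_1$, and likewise for $A_2$. So $W$, $\rho$ and $S_+$ split, and both the numerator and the Weyl denominator factor termwise. This gives $X_{V(A_1\sqcup A_2)}=X_{V(A_1)}X_{V(A_2)}$.

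On the right-hand side, $V^{\rho^\vee}=V(A_1)^{\rho^\vee}\oplus V(A_2)^{\rho^\vee}$ with weights in disjoint coordinates. Hence the $\h$-invariants form a tensor product. By the proof of Lemma \ref{lem:cofree} \ref{lem:cofree3}, $\C[V]^G=\C[V(A_1)]^G\otimes\C[V(A_2)]^G$. So both sides of \eqref{eq:id} are multiplicative.

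\textbf{Part \ref{lem:trivtype2}.} Let the leaf $b$ be attached at the vertex $a$. The new summand $V_1\boxtimes V_1$ has weights $(\pm1,\pm1)$ in the coordinates $(a,b)$. It contributes $(1,1)$ to $S_+$, $(1,-1)$ and $(-1,1)$ to $S_0$, and $(-1,-1)$ to $S_-$.

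On the left, write $w=(w_0,\varepsilon)$ with $\varepsilon\in\{\pm1\}$ acting on $b$, and set $s=t_a^{-w_0(1)}$. Summing over $\varepsilon$ gives
\[t_b(1-qst_b^{-1})-t_b^{-1}(1-qst_b)=t_b-t_b^{-1}.\]
This is exactly the Weyl denominator factor for $b$, so $X_{V(A')}=X_{V(A)}$.

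On the right, an $\h$-invariant monomial must have $b$-weight zero. So it uses the two new $S_0$ weights equally often, and their contributions to the $a$-coordinate then cancel. Hence $\C[V(A')^{\rho^\vee}]^\h=\C[V(A)^{\rho^\vee}]^\h\otimes\C[xy]$, which gains a factor $1/(1-q^2)$. By the proof of Lemma \ref{lem:cofree} \ref{lem:cofree2}, $\C[V(A')]^G=\C[V(A)]^G\otimes\C[V'']^G$, which gains the same degree-$2$ factor. So the ratio is unchanged.

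\textbf{Part \ref{lem:trivtype1}.} I would check each basic quiver by the coefficient-of-$t^\rho$ count.

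\emph{Single vertex and framed vertex.} For the single vertex both sides equal $1$. For the framed vertex the same two-term computation as in Part \ref{lem:trivtype2} gives $X=1$. Also $S_0=\varnothing$ and $\C[V]^G=\C$, so the right-hand side is $1$.

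\emph{Framed path on $n$ vertices.} Here $S_+$ consists of the two framings and the $n-1$ edge weights $(1,1)$. Every vertex lies in exactly two elements of $S_+$. The condition "each coordinate of $\lambda_I$ is $\pm1$" forces each vertex to be touched $0$ or $2$ times. This propagates along the path, so $I=\varnothing$ or $I=S_+$. Therefore
\[X=1+(-1)^n(-q)^{n+1}=1-q^{n+1}.\]
The $S_0$ weights $\pm(e_k-e_{k+1})$ are linearly independent, so $\dim_q\C[V^{\rho^\vee}]^\h=(1-q^2)^{-(n-1)}$. Lemma \ref{lem:cofree} gives $\dim_q\C[V]^G=\bigl((1-q^2)^{n-1}(1-q^{n+1})\bigr)^{-1}$, so the two sides agree.

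\emph{$l$-cycle with $l\ge2$.} The same all-or-none argument gives $X=1+(-1)^l(-q)^l=1+q^l$. An $\h$-invariant monomial in the $S_0$ variables must satisfy $a_k-b_k=c$ for a constant $c\in\Z$. Summing over $c$ gives
\[\dim_q\C[V^{\rho^\vee}]^\h=\frac{1+q^l}{(1-q^l)(1-q^2)^l}.\]
Since $\dim_q\C[V]^G=\bigl((1-q^2)^l(1-q^l)\bigr)^{-1}$, the two sides agree.

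\emph{Self-loop.} This is the adjoint representation of $\sl_2$, already treated.

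\emph{$3$-hyperedge.} Here $S_0=\varnothing$ and $S_+=\{(1,1,1)\}\cup\{\text{the three weights with one }-1\}$. I would enumerate the $16$ subsets $I$, which should give $1+3q^2-3q^2-q^4=1-q^4$. Since $\C[V]^G$ is generated by Cayley's hyperdeterminant in degree $4$ \cite{littelmann}, the right-hand side is also $1-q^4$.

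I expect the main obstacle to be keeping the sign and degree bookkeeping straight in these basic cases, especially the $\h$-invariant count for the cycle. The leaf and disjoint-union steps are essentially formal once the two-term cancellation is observed.
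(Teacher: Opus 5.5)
Your proposal is correct and follows essentially the paper's route: multiplicativity for disjoint unions, invariance of both sides of the identity under adding a leaf, and a coefficient-of-$t^\rho$ count for the basic quivers. Your two-term cancellation over the leaf's Weyl factor repackages the paper's observation that $\alpha'\in I$ makes $\lambda_I$ singular, and your count of $\h$-invariants via $a_k-b_k=c$ replaces the paper's generators-and-relation description for the cycle.

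One phrasing fix: antiinvariance alone does not make $X_V$ equal to the coefficient of $t^\rho$ --- you also need every regular $\lambda_I$ to lie in $W\rho$. In your cases this holds because for paths and cycles each vertex lies in exactly two elements of $S_+$, so the coordinates of $\lambda_I$ lie in $\{-1,0,1\}$; for the hyperedge it follows from the $16$-subset enumeration.
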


\begin{proof}
\ref{lem:trivtype1}.

A single vertex corresponds to the zero representation of $\sl_2$, for which we have $\dim_q\C[V]^G=\dim_q\C[V^{\rho^\vee}]^\h=1$. So we have $X_V=1$ and \eqref{eq:id} becomes $1=1$.

A vertex with framing corresponds to $V=V_1$, for which we have $S_+=\{1\},\dim_q\C[V]^G=1,\dim_q\C[V^{\rho^\vee}]^\h=1$. Borrowing notation from Section \ref{sec:eqdim}, we have $\lambda_{\{1\}}=0$, which lies on a reflection hyperplane. So we are left with $X_V=1$, and \eqref{eq:id} once again becomes $1=1$.

Let $V$ be a path of length $l$ with framings on each terminal vertex. Then from the proof of Lemma \ref{lem:cofree} \ref{lem:cofree1}.~it follows that $\dim_q\C[V]^G=\frac{1}{(1-q^2)^{l-1}(1-q^{l+1})}$.
For the $k$-th copy of $\sl_2$, let its fundamental weight be $\b{e}_k$. Then $S_+$ consists of $\b{e}_1,\b{e}_l$, and $\b{e}_k+\b{e}_{k+1}$ for $1\leq k\leq l-1$ and $S_0$ consists of $\b{e}_k-\b{e}_{k+1},-\b{e}_k+\b{e}_{k+1}$ for $1\leq k\leq l-1$. We then see that $(S_0)_0$ consists of functions on $S_0$ that take the same values on $\b{e}_k-\b{e}_{k+1},-\b{e}_k+\b{e}_{k+1}$ for each $k$, so we obtain $\dim_q\C[V^{\rho^\vee}]^\h=\frac{1}{(1-q^2)^{l-1}}$. From the description of $S_+$ it is also clear that the only $I\neq\varnothing\subset S_+$ for which $\lambda_I$ does not lie on a reflection hyperplane is $I=S_+$, for which we have $\lambda_I=-\sum_{k=1}^l\b{e}_k$. The coefficient this contributes to $X_V$ is $(-1)^l(-q)^{l+1}=-q^{l+1}$, so $X_V$ becomes $1-q^{l+1}$, which is exactly $\frac{\dim_q\C[V^{\rho^\vee}]^\h}{\dim_q\C[V]^G}$.

The $1$-cycle corresponds to the adjoint representation of $\sl_2$, for which we already know \eqref{eq:id} holds.

For the $l$-cycle with $l>1$, the proof of Lemma \ref{lem:cofree} \ref{lem:cofree1}.~shows that $\dim_q\C[V]^G=\frac{1}{(1-q^2)^l(1-q^l)}$. $S_0$ consists of all weights $\b{e}_k-\b{e}_{k+1},-\b{e}_k+\b{e}_{k+1}$ for $k\in\Z/l\Z$. If $v_\mu$ is an element of $V$ in the $\mu$-weight space, $\C[V^{\rho^\vee}]^\h$ is generated by $p_k:=v_{\b{e}_k-\b{e}_{k+1}}v_{-\b{e}_k+\b{e}_{k+1}}$, $p_+:=\prod_{k\in\Z/l\Z}v_{\b{e}_k-\b{e}_{k+1}}$, and $p_-:=\prod_{k\in\Z/l\Z}v_{-\b{e}_k+\b{e}_{k+1}}$, with the relation $p_+p_-=\prod_{k\in\Z/l\Z}p_k$. So we obtain $\dim_q\C[V^{\rho^\vee}]^\h=\frac{1-q^{2l}}{(1-q^2)^l(1-q^l)^2}$. 
We have $S_+=\{\b{e}_k+\b{e}_{k+1}\}$ for $k\in\Z/l\Z$, and again it is easy to see that the only $I\neq\varnothing\subset S_+$ for which $\lambda_I$ does not lie on a reflection hyperplane is $I=S_+$, for which we have $\lambda_I=-\sum_{k\in\Z/l\Z}\b{e}_k$. This contributes the coefficient $(-1)^l(-q)^l=q^l$ to $X_V$, so $X_V$ becomes $1+q^l$, which is exactly $\frac{\dim_q\C[V^{\rho^\vee}]^\h}{\dim_q\C[V]^G}$.

Finally, for the $3$-hyperedge, we have $\dim_q\C[V]^G=\frac{1}{1-q^4}$ from \cite{littelmann}. We also have $S_0=\varnothing$, so $\dim_q\C[V^{\rho^\vee}]^\h=1$. We have $S_+=\{\b{e}_1+\b{e}_2+\b{e}_3,-\b{e}_1+\b{e}_2+\b{e}_3,\b{e}_1-\b{e}_2+\b{e}_3,\b{e}_1+\b{e}_2-\b{e}_3\}$. The $I\neq \varnothing\subset S_+$ for which $\lambda_I$ does not lie on a reflection hyperplane are all 2 and 4 element subsets of $S_+$. A short computation shows that the 2 element subsets cancel out in $X_V$, and the 4 element subset contributes $(-1)^3(-q)^4=-q^4$. So $X_V$ becomes $1-q^4$, which is exactly $\frac{\dim_q\C[V^{\rho^\vee}]^\h}{\dim_q\C[V]^G}$.

\ref{lem:trivtype2}.

Let $V=V(A),V'=V(A')$. Let $V''\cong V_1\boxtimes V_1$ be the summand of $V'$ corresponding to the edge connecting the leaf. From the proof of Lemma \ref{lem:cofree} \ref{lem:cofree2}.~we have that 
\begin{equation*}
\dim_q\C[V']^G=\dim_q\C[V]^G\dim_q\C[V'']^G=\frac{\dim_q\C[V]^G}{(1-q^2)}.
\end{equation*}
Let $\g''\cong\sl_2$ be the copy of $\sl_2$ in the Lie algebra for $A'$ corresponding to the leaf and let $\omega$ be the fundamental weight of $\g''$. We have $S_0=S_0^V\cup S_0^{V''}$, where $S_0^V,S_0^{V''}$ are the sets $S_0$ for $V,V''$, respectively. Note that every element of $S_0^V$ is orthogonal to $\omega$, so the only functions in $(S_0)_0$ must be equal on both elements of $S_0^{V''}$. This implies $\C[(V')^{\rho^\vee}]^\h=\C[V^{\rho^\vee}]^\h\otimes\C[(V'')^{\rho^\vee}]^\h$, so $\dim_q\C[(V')^{\rho^\vee}]^\h=\dim_q\C[V^{\rho^\vee}]^\h\dim_q\C[(V'')^{\rho^\vee}]^\h=\frac{\dim_q\C[V^{\rho^\vee}]^\h}{1-q^2}$. So we have $\frac{\dim_q\C[(V')^{\rho^\vee}]^\h}{\dim_q\C[V']^G}=\frac{\dim_q\C[V^{\rho^\vee}]^\h}{\dim_q\C[V]^G}$.

Let $S_+^V,S_+^{V''}$ be the sets $S_+$ for $V,V''$, respectively. Then we have $S_+=S_+^V\cup S_+^{V''}$, and $S_+^{V''}$ consists of a single element, call it $\alpha'$. Then note that for any $I\subset S_+$, if $\alpha'\in I$ then $\langle\lambda_I,\omega\rangle=0$, so $\lambda_I$ lies on a reflection hyperplane. So the only $I$ that contribute to $X_{V'}$ are subsets of $S_+^V$, and we get $X_V=X_{V'}$. So \eqref{eq:id} holds for $V'$ since it holds for $V$.

\ref{lem:trivtype3}.

Let $V'=V(A_1),V''=V(A_2),V=V(A_1\sqcup A_2)$. Then we have $\C[V]=\C[V']\boxtimes\C[V'']$, so we have $\dim_q\C[V]^G=\dim_q\C[V']^G\dim_q\C[V'']^G$. If $S_+',S_+'',S_0',S_0''$ are the sets $S_+,S_0$ for $V',V''$, respectively, then we have $S_+=S_+'\cup S_+'',S_0=S_0'\cup S_0''$. Moreover, any element of $S_+'$ is always orthogonal to any element of $S_+''$, and the same holds for $S_0',S_0''$. The former implies $X_V=X_{V'}X_{V''}$ and the latter implies $\dim_q\C[V^{\rho^\vee}]^\h=\dim_q\C[(V')^{\rho^\vee}]^\h\dim_q\C[(V'')^{\rho^\vee}]^\h$. So \eqref{eq:id} holds for $V$ since it holds for $V',V''$.
    
\end{proof}

Note that for all extended quiver representations of trivial type containing no $l$-cycles, the proof of Lemma \ref{lem:trivtype} implies that $\C[V^{\rho^\vee}]$ is free over $\C[V^{\rho^\vee}]^\h$. In particular, we obtain

\begin{proposition}\label{prop:sl2s}
    Theorem \ref{thm} holds for all acyclic extended quiver representations of trivial type.
\end{proposition}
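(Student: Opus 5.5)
The argument assembles Proposition \ref{prop:hesselink} with Lemmas \ref{lem:cofree} and \ref{lem:trivtype}, following the reduction described after Proposition \ref{prop:hesselink}. Fix an acyclic extended quiver $A$ of trivial type and write $V=V(A)$.

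\textbf{Step 1: cofreeness.} By the recursive definition, $A$ is built from basic extended quivers by adding leaves and taking disjoint unions. Since $A$ is acyclic, no $l$-cycles occur among the basic pieces, although cycles would be handled by the same lemma anyway. Induction on this construction, using parts \ref{lem:cofree1}--\ref{lem:cofree3} of Lemma \ref{lem:cofree}, shows that $V$ is cofree. Hence Proposition \ref{prop:hesselink} applies.

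\textbf{Step 2: the identity \eqref{eq:id}.} The same induction, now with parts \ref{lem:trivtype1}--\ref{lem:trivtype3} of Lemma \ref{lem:trivtype}, shows that $V$ satisfies \eqref{eq:id}. This is equivalent to \eqref{eq:hesselink} with
\[
P=\frac{\prod_{\alpha\in S_+}(1-qt^{-\alpha})}{\dim_q\C[V^{\rho^\vee}]^\h},
\]
since the factor $\dim_q\C[V^{\rho^\vee}]^\h\in\Z[[q]]$ is $W$-invariant and can be pulled out of the alternating sum. Proposition \ref{prop:hesselink} then gives $\ch_q\C[\N_V]=\sum_{\lambda}M_q(\lambda)\chi_\lambda$, where $M_q$ is determined by this $P$.

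\textbf{Step 3: identifying $M_q$ with $\M_q$.} This requires $\C[V^{\rho^\vee}]$ to be free over $\C[V^{\rho^\vee}]^\h$, with basis the monomials indexed by weakly negative functions on $S_0$. Granting this, $\dim_q$-wise and $\h$-character-wise we have
\[
\ch\C[V^{\rho^\vee}]=\dim_q\C[V^{\rho^\vee}]^\h\cdot\sum_{m\text{ weakly neg.}}q^{|m|}t^{-\sum m(\alpha)\alpha}.
\]
Multiplying by $\prod_{\alpha\in S_-}(1-qt^{-\alpha})^{-1}$ then gives
\[
\sum_\lambda\P_q(\lambda)t^\lambda=\frac{P}{\prod_{\alpha\in S}(1-qt^{-\alpha})}.
\]
Here the sign convention $\P_q(\mu)$ counts $m$ with $\sum_\alpha m(\alpha)\alpha=-\mu$, and this is to be checked against the monomial weights $t^{-\alpha}$. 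Comparing with the defining property of $M_q$ and using the Weyl character formula shows $M_q=\M_q$.

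\textbf{Main obstacle: the freeness of $\C[V^{\rho^\vee}]$.} This must be verified for acyclic pieces, and I would do it by inducting along the same construction.

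\emph{Basic pieces.} For a single vertex, a framed vertex, and the $3$-hyperedge, $S_0=\varnothing$, so freeness is trivial. For a framed path, $S_0$ consists of the pairs $\pm(\b{e}_k-\b{e}_{k+1})$. These are linearly independent in pairs, so the invariant ring is the polynomial ring in the products $p_k=v_{\b{e}_k-\b{e}_{k+1}}v_{-\b{e}_k+\b{e}_{k+1}}$. These form a regular sequence, so the ring is free with the claimed monomial basis.

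\emph{Adding a leaf.} The proof of Lemma \ref{lem:trivtype} \ref{lem:trivtype2} shows that the zero-weight monomials factor as a tensor product. Hence $\C[V'^{\rho^\vee}]$ is the tensor product of the old free module with the free module $\C[x,y]$ over $\C[xy]$.

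\emph{Disjoint unions.} Here freeness follows from orthogonality of the two $S_0$'s, again via a tensor product.

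\emph{Role of acyclicity.} For an $l$-cycle the invariant ring has the relation $p_+p_-=\prod_k p_k$. It is then not polynomial, and the weakly negative monomials fail to give a basis. This is precisely why the proposition is restricted to acyclic quivers.
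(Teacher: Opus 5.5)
Your proposal is correct and follows the paper's own argument: the paper also combines Lemma \ref{lem:cofree} (cofreeness), Lemma \ref{lem:trivtype} (the identity \eqref{eq:id}) and the freeness of $\C[V^{\rho^\vee}]$ over $\C[V^{\rho^\vee}]^\h$ in the acyclic case, and feeds these into Proposition \ref{prop:hesselink} through the reduction stated right after it. The paper only asserts that this freeness follows from the proof of Lemma \ref{lem:trivtype}, whereas you spell out the induction over framed paths, leaves and disjoint unions via tensor-product decompositions; your closing remark on cycles holds only for $l>1$, since for a self-loop $\C[V^{\rho^\vee}]=\C[\h]$ is free, but this does not affect the argument.
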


Note that taking the direct sum of any representation with the trivial representation does not change any of $\C[\N_V]$, $X_V$, or cofreeness, so Theorem \ref{thm} also holds if we take the direct sum of an acyclic extended quiver representation of trivial type with copies of the trivial representation.

Through computer calculations, we expect Theorem \ref{thm} to hold for all extended quiver representations of trivial type, including ones with $l$-cycles. However, the $l$-cycles have the property that $\C[V^{\rho^\vee}]$ is not free over $\C[V^{\rho^\vee}]^\h$. Indeed, in the proof of Lemma \ref{lem:trivtype} it was shown that $\C[V^{\rho^\vee}]^\h$ is not a free commutative $\C$-algebra, and the statement follows from \cite[Lemma 4.2.10]{springer}. As illustrated by the following, this makes any proof of Theorem \ref{thm} in this case much more complicated.

\begin{proposition}\label{prop:2cycle}
    Theorem \ref{thm} holds for the extended quiver representation corresponding to the $2$-cycle.
\end{proposition}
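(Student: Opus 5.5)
Since $\C[V^{\rho^\vee}]$ is not free over $\C[V^{\rho^\vee}]^\h$ here, I would not use the choice $P=\prod_{\alpha\in S_+}(1-qt^{-\alpha})/\dim_q\C[V^{\rho^\vee}]^\h$. Instead I would apply Proposition \ref{prop:hesselink} directly, with $P$ \emph{defined} so that $\sum_\lambda\P_q(\lambda)t^\lambda=P/\prod_{\alpha\in S}(1-qt^{-\alpha})$. This choice makes $M_q=\M_q$ automatically. Cofreeness holds by Lemma \ref{lem:cofree}, and from its proof $\dim_q\C[V]^G=\frac{1}{(1-q^2)^3}$. The whole statement then reduces to checking \eqref{eq:hesselink} for this explicit $P$.

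Concretely, $G=SL_2\times SL_2$ and $V=(V_1\boxtimes V_1)^{\oplus 2}$. Write $\b{e}_1,\b{e}_2$ for the fundamental weights and $x:=t^{\b{e}_1-\b{e}_2}$. The weight multisets are:
\begin{itemize}
\item $S_+=\{\b{e}_1+\b{e}_2,\ \b{e}_1+\b{e}_2\}$;
\item $S_-=\{-\b{e}_1-\b{e}_2,\ -\b{e}_1-\b{e}_2\}$;
\item $S_0$: two copies of $\b{e}_1-\b{e}_2$ (call the corresponding variables $a,a'$) and two copies of $\b{e}_2-\b{e}_1$ (call them $b,b'$).
\end{itemize}
The elements of $(S_0)_0$ are the nonzero $m$ with $m(a)+m(a')=m(b)+m(b')$. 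Hence $m$ is weakly negative on $S_0$ exactly when it does not dominate any of the minimal elements $ab,ab',a'b,a'b'$. Equivalently, either $m(a)=m(a')=0$ or $m(b)=m(b')=0$.

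The generating series of weakly negative functions on $S_{\leq0}$ is therefore
\[\frac{1}{(1-qt^{\b{e}_1+\b{e}_2})^2}\left(\frac{1}{(1-qx)^2}+\frac{1}{(1-qx^{-1})^2}-1\right),\]
up to the sign convention $t^{-\alpha}$, which I would fix carefully. Multiplying by $\prod_{\alpha\in S}(1-qt^{-\alpha})$ gives
\[P=(1-qt^{-\b{e}_1-\b{e}_2})^2\Big[(1-qx^{-1})^2+(1-qx)^2-(1-qx)^2(1-qx^{-1})^2\Big].\]
This is a Laurent polynomial, so the definition of $\P_q$ matches $P/\prod_{\alpha\in S}(1-qt^{-\alpha})$ by construction.

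It remains to verify
\[(1-q^2)^3\sum_{w\in W}(-1)^{\ell(w)}t^{w\rho}=\sum_{w\in W}(-1)^{\ell(w)}t^{w\rho}wP,\]
where $W=(\Z/2)^2$ and $\rho=\b{e}_1+\b{e}_2$. My plan for this check is as follows.
\begin{enumerate}
\item Expand $t^\rho P$ into monomials $t^\mu$. Using $(qx)(qx^{-1})=q^2$, the bracket simplifies to an expression in $u=qx$ and $v=qx^{-1}$ with $uv=q^2$.
\item By $W$-antiinvariance of the right hand side, discard every $\mu$ lying on a wall, i.e.\ with some coordinate $\langle\mu,\b{e}_i^\vee\rangle=0$.
\item Map each remaining $\mu$ to the dominant chamber, with sign $(-1)^{\ell(w)}$. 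Since the coordinates of $\mu$ are bounded by small integers, the survivors should all lie in $W\rho$.
\item Sum the signed coefficients of $t^{\pm\b{e}_1\pm\b{e}_2}$ and check that the total coefficient of $t^\rho$ equals $(1-q^2)^3$.
\end{enumerate}
As a sanity check, the part of $P$ coming from $(1-qt^{-\rho})^2$ alone reproduces the computation $X_V=1+q^2$ of Lemma \ref{lem:trivtype}. The correction terms in the bracket must therefore contribute the factor turning $1+q^2$ into $(1-q^2)^3\cdot\frac{1}{\dim_q\C[V^{\rho^\vee}]^\h}$-type corrections. Precisely, they account for the relation $p_+p_-=p_1p_2$ that spoils freeness.

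I expect the main obstacle to be this bookkeeping. There are many monomials, such as $q^kx^{\pm j}t^{-i\rho}$, whose shifted weights can land in $W\rho$ with opposite signs. The cancellations are not organized by an involution as clean as in Lemma \ref{lem:2vertex}, and the sign conventions ($t^{-\alpha}$ versus $t^{\alpha}$, and which of $a,b$ count as negative) must be kept consistent with the definition of $\P_q$. To keep this under control, I would first reduce modulo the subspace spanned by wall monomials. Then, because each coordinate of a surviving weight lies in $\{\pm1\}$, only the finitely many terms with $\mu\in\{\pm\b{e}_1\pm\b{e}_2\}$ need to be tracked.
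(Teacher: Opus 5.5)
Your proposal is correct, and it takes a genuinely different route from the paper.

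\textbf{The paper's route.} The paper keeps $P=\prod_{\alpha\in S_+}(1-qt^{-\alpha})/\dim_q\C[V^{\rho^\vee}]^\h$. For this choice \eqref{eq:hesselink} is just \eqref{eq:id}, already known from $X_V=1+q^2$ in Lemma \ref{lem:trivtype}. The price is that the resulting $M_q$ is no longer obviously $\M_q$. The paper closes that gap with an explicit infinite (periodic) free resolution of each weight piece of $\C[V^{\rho^\vee}]$ over the non-polynomial ring $A=\C[V^{\rho^\vee}]^\h$. It then shows that the syzygy correction antisymmetrizes to zero.

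\textbf{Your route.} You build $\P_q$ into $P$, so $M_q=\M_q$ is tautological and all the content moves into a finite Laurent-polynomial identity. Your generating function is already right in the paper's $t^{-\alpha}$ convention. The bookkeeping you feared is small.
\begin{itemize}
\item With $u=qx$, $v=qx^{-1}$ and $uv=q^2$, your bracket collapses to $1-4q^2-q^4+2q^3(x+x^{-1})$.
\item Hence $t^\rho P=(t^\rho-2q+q^2t^{-\rho})\bigl(1-4q^2-q^4+2q^3(x+x^{-1})\bigr)$ has only nine monomials.
\item The monomials off the walls are $(1-4q^2-q^4)t^{\rho}$, $-4q^4t^{\pm(\b{e}_1-\b{e}_2)}$ and $q^2(1-4q^2-q^4)t^{-\rho}$, all of which lie in $W\rho$.
\item After antisymmetrization, the signed coefficient of $t^\rho$ is $(1+q^2)(1-4q^2-q^4)+8q^4=(1-q^2)^3$, as required.
\end{itemize}
Your heuristic remark should therefore read: after antisymmetrization the bracket acts as multiplication by $1/\dim_q\C[V^{\rho^\vee}]^\h=(1-q^2)^3/(1+q^2)$, turning $1+q^2$ into $(1-q^2)^3$.

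\textbf{What each route buys.} Your route generalizes cleanly. For any cofree $V$, the weakly negative functions on $S_0$ are exactly the standard monomials of the monomial ideal $\C[V^{\rho^\vee}]^\h_+\C[V^{\rho^\vee}]$. The multigraded Hilbert series of that quotient is $\mathcal{K}/\prod_{\alpha\in S_0}(1-qt^{-\alpha})$, where $\mathcal{K}$ is a polynomial (its $K$-polynomial, coming from a finite resolution over the polynomial ring). So Theorem \ref{thm} is always equivalent to \eqref{eq:hesselink} with $P=\prod_{\alpha\in S_+}(1-qt^{-\alpha})\,\mathcal{K}$. For each fixed cofree $V$, e.g.\ each $l$-cycle, this is a finite check with no resolutions over $A$. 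The paper's route instead keeps the link to \eqref{eq:id} and the Hesselink-type condition. It also isolates non-freeness of $\C[V^{\rho^\vee}]$ over $\C[V^{\rho^\vee}]^\h$ as the sole source of the discrepancy (Remark \ref{rem:resolution}).
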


\begin{proof}
    Let $A=\C[V^{\rho^\vee}]^\h$. By Proposition \ref{prop:hesselink}, we wish to show
    \[\sum_{w\in W}(-1)^{\ell(w)}t^{w\rho}\sum_{\lambda\in\Lambda_+}t^{w\lambda}\M_q(\lambda)=\sum_{w\in W}(-1)^{\ell(w)}t^{w\rho}w\left(\frac{1}{\dim_qA\prod_{\alpha\in S_{\leq 0}}(1-qt^{-\alpha})}\right).\]
    Using the definition of $\M_q$, this is equivalent to
    \[\dim_qA\sum_{w\in W}(-1)^{\ell(w)}t^{w\rho}\sum_{\lambda\in\Lambda}\P_q(w\lambda)t^{w\lambda}=\sum_{w\in W}(-1)^{\ell(w)}t^{w\rho}\left(\frac{1}{\prod_{\alpha\in S_{\leq 0}}(1-qt^{-w\alpha})}\right).\]
    Let $m:S_0\to\Z_{\geq 0}$ be a function. We will use the shorthand $t^m$ to mean $\prod_{\alpha\in S_0}t^{m(\alpha)\alpha}$ and $|m|$ to mean $\sum_{\alpha\in S_0}m(\alpha)$. Also, let $WN(S_0)$ be the set of weakly negative functions on $S_0$. We wish to show that
    \begin{equation}\label{eq:2cycle}
        t^\rho\frac{1}{\prod_{\alpha\in S_-}(1-qt^{-\alpha})}\left(\dim_qA\sum_{m\in WN(S_0)}q^{|m|}t^{-m}-\sum_{m\in\Z_{\geq 0}S_0}q^{|m|}t^{-m}\right).
    \end{equation}
    is zero after antisymmetrization. Note that $\sum_{m\in\Z_{\geq 0}S_0\setminus(S_0)_0}q^{|m|}t^{-m}$ is exactly the $q$-character of the space of generators of $\C[V^{\rho^\vee}]$ as an $A$-module, and thus the part of \eqref{eq:2cycle} in parentheses exactly measures the $q$-character of the syzygies of $\C[V^{\rho^\vee}]$ as an $A$-module.

    Let us compute these syzygies. Weights for $\g=\sl_2\oplus\sl_2$ can be described as ordered pairs of integers, and we have $S_0=\{(1,-1),(1,-1),(-1,1),(-1,1)\}$. Let $a_1,a_2,b_1,b_2$ be the generators of $\C[V^{\rho^\vee}]$ such that the $a_i$ have weight $(1,-1)$ and the $b_i$ have weight $(-1,1)$. Then $A$ is generated by
    \[p_1:=a_1b_1,\quad p_2:=a_1b_2,\quad p_3:=a_2b_1,\quad p_4:=a_2b_2\]
    with the relation $p_1p_4=p_2p_3$. The generators of $\C[V^{\rho^\vee}]$ over $A$ are given by $1$, $e_{d,k}:=a_1^{d-k}a_2^k$ for $d>0,0\leq k\leq d$, and $f_{d,k}:=b_1^{d-k}b_2^k$. 

    For $d\in\Z$, let $\C[V^{\rho^\vee}]_d$ be the $A$-submodule of $\C[V^{\rho^\vee}]$ consisting of elements with weight $(d,-d)$. For $d\geq 0$, the generators of this submodule are $e_{d,k}$ for $0\leq k\leq d$. A resolution of $\C[V^{\rho^\vee}]_d$ is given by
    \[\cdots\xto{\partial_2}A\{f^2_k,g^2_k\}_{0\leq k\leq d-1}\xto{\partial_1}A\{f^1_k,g^1_k\}_{0\leq k\leq d-1}\xto{\partial_0}A\{e_{d,k}\}_{0\leq k\leq d},\]
    where
    \begin{align*}
        \partial_0(f_k^1)&=p_3e_{d,k}-p_1e_{d,k+1},&\partial_0(g_k^1)&=p_4e_{d,k}-p_2e_{d,k+1},\\
        \partial_i(f_k^{i+1})&=p_3f_k^i-p_1g_k^i,&\partial_i(g_k^{i+1})&=p_4f_k^i-p_2g_k^i
    \end{align*}
    if $i>0$ is even, and
    
    \[\partial_i(f_k^{i+1})=p_2f_k^i-p_1g_k^i,\quad\partial_i(g_k^{i+1})=p_4f_k^i-p_3g_k^i\]
    if $i$ is odd. Indeed, a relation $F=0$ for $F\in A\{e_{d,k}\}$ can be assumed homogeneous in its multidegree in $b_1,b_2$. Then a straightforward induction argument shows $F$ can be generated by $\partial_0(f_k^1),\partial_0(g_k^1)$, which are the relations with multidegree $(1,0),(0,1)$, respectively. So $\coker\partial_0=\C[V^{\rho^\vee}]_d$. We have that $\ker\partial_0$ is generated by elements in $A\{f_k^1,g_k^1\}$ for fixed $k$. Indeed, this is clear since given $F\in\ker\partial_0$, we can consider which $e_{d,k}$ appear in $\partial_0(F)$. From there, it is easy to see that $\ker\partial_0$ is generated by $\partial_1(f_k^2),\partial_1(g_k^2)$. The same argument then shows exactness at each $A\{f_k^r,g_k^r\}$.
    
    Thus we obtain
    \begin{align*}
        \dim_q\C[V^{\rho^\vee}]_d&=\dim_qA\left((d+1)q^d-2d(q^{d+2}-q^{d+4}+q^{d+6}-\cdots)\right)\\
        &=\dim_qA\left((d+1)q^d-\frac{2dq^{d+2}}{1+q^2}\right)
    \end{align*}
    where the $(d+1)q^d$ term comes from the generators and the rest of the terms from the syzygies. A similar calculation yields the same result when $d<0$.

    Thus we obtain
    \[\dim_qA\sum_{m\in\Z_{\geq 0}S_0\setminus(S_0)_0}q^{|m|}t^{-m}-\sum_{m\in\Z_{\geq 0}S_0}q^{|m|}t^{-m}=\dim_qA\sum_{d\neq 0}t^{(d,-d)}\left(\frac{2|d|q^{|d|+2}}{1+q^2}\right).\]
    We can write
    \[\sum_{d>0}dt^{(d,-d)}q^{d-1}=\frac{t^{(1,-1)}}{(1-qt^{(1,-1)})^2},\quad\sum_{d>0}dt^{(-d,d)}q^{d-1}=\frac{t^{(-1,1)}}{(1-qt^{(-1,1)})^2},\]
    so \eqref{eq:2cycle} can be written as
    \begin{align*}
        &\frac{2q^3\dim_qA}{1+q^2}\left(\frac{t^{(1,1)}}{(1-qt^{(1,1)})^2}\right)\left(\frac{t^{(1,-1)}}{(1-qt^{(1,-1)})^2}+\frac{t^{(-1,1)}}{(1-qt^{(-1,1)})^2}\right)\\
        =&\frac{2q^3\dim_qA}{1+q^2}\left(\frac{t^{(2,0)}}{(1-qt^{(1,1)})^2(1-qt^{(1,-1)})^2}+\frac{t^{(0,2)}}{(1-qt^{(1,1)})^2(1-qt^{(-1,1)})^2}\right).
    \end{align*}
    The action of the two simple reflections in $W$ on weights are via $(a,b)\mapsto(a,-b)$ and $(a,b)\mapsto(-a,b)$. The two terms in the above are invariant under the former and latter of these, respectively. Thus both terms go to zero after antisymmetrization.
\end{proof}

\begin{remark}\label{rem:resolution}
    The proof of Proposition \ref{prop:2cycle} illustrates a general idea for proving Theorem \ref{thm} using \eqref{eq:id}. Namely, let $A=\C[V^{\rho^\vee}]^\h$, and suppose we have a free resolution of $A$-modules
    \[\cdots\to F_2\to F_1\to F_0\to\C[V^{\rho^\vee}].\]
    Then we obtain $\ch_q\C[V^{\rho^\vee}]=\sum_{i\geq 0}(-1)^i\ch_qF_i$. So following the first paragraph of the proof of Proposition \ref{prop:2cycle}, a proof of Theorem \ref{thm} reduces to a proof that
    \[\sum_{w\in W}(-1)^{\ell(w)}w\left(\frac{t^\rho\sum_{i\geq 1}(-1)^i\ch_qF_i}{\prod_{\alpha\in S_-}(1-qt^{-\alpha})}\right)=0.\]
    Note that if $\C[V^{\rho^\vee}]$ is free over $\C[V^{\rho^\vee}]^\h$, the resolution is trivial, so this is automatically satisfied. This recovers the idea from Section \ref{sec:id} that if $\C[V^{\rho^\vee}]$ is free over $\C[V^{\rho^\vee}]^\h$, then \eqref{eq:id} implies Theorem \ref{thm}.
\end{remark}

\begin{remark}
    If instead of looking at a product of $\sl_2$'s we look at a product of $\gl_2$'s, then we may interpret the $l$-cycle as the representation of a cyclic quiver with equal dimensions, from which we can obtain the original extended quiver representation for $\prod\sl_2$ by pulling back along the inclusion $\prod\sl_2\hookrightarrow\prod\gl_2$. Then Theorem \ref{thm} holds for the $\prod\gl_2$-representations by Section \ref{sec:eqdim}. Indeed, for such representations of a product of $\gl_2$'s we do have the condition that $\C[V^{\rho^\vee}]$ is free over $\C[V^{\rho^\vee}]^\h$, because there are fewer $\h$-invariants for $\prod\gl_2$ than for $\prod\sl_2$.

    In principle, one could prove an analog of Proposition \ref{prop:sl2s} for a product of $\gl_2$'s by creating analogs of all of the extended quiver representations for $\gl_2$. It would be interesting to see whether such an analog could be used to classify representations for $\prod\gl_2$ such that Theorem \ref{thm} holds, or whether there are many more such representations.
\end{remark}

\begin{conjecture}
    Let $\g=\prod\sl_2$. Then $V$ is the direct sum of an extended quiver representation of trivial type and any number of copies of the trivial representation if and only if $V$ is Hesselink-type. Moreover, all Hesselink-type representations of $\g$ satisfy \eqref{eq:id} and Theorem \ref{thm} holds for all such representations.
\end{conjecture}

This conjecture is supported by computer calculations for small $V$ and small $\g$, and would serve as strong evidence for Conjecture \ref{conj:cofree} since it would prove the conjecture in the case $\g=\prod\sl_2$. As further evidence for this conjecture, we prove the conjecture for $\g=\sl_2$.

\begin{proposition}\label{prop:sl2}
    Let $\g=\sl_2$. Then the only Hesselink-type representations are direct sums of extended quiver representations of trivial type and copies of the trivial representation.
\end{proposition}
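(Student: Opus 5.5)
The plan is a degree (weight) bound. For $\g=\sl_2$ the Hesselink-type condition forces the positive weights of $V$ to have small total sum, and only four representations survive that bound, all of which are basic extended quiver representations. Identify $\Lambda\cong\Z$ via the fundamental weight, so $\rho=1$, $W=\{1,s\}$ with $s(t)=t^{-1}$, and $\chi_k=(t^{k+1}-t^{-k-1})/(t-t^{-1})$. Write $V=\bigoplus_i V_{d_i}\oplus\C^{r}$. Then $S_+$ is the multiset $a_1,\dots,a_n$ of positive weights of $V$, where each $V_{d}$ with $d\geq 1$ contributes $d,d-2,\dots$ down to $1$ or $2$. Put $D=\sum_j a_j$ and, for $I\subset S_+$, let $\sigma_I=\sum_{\alpha\in I}\alpha$.

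\textbf{Rewriting $X_V$.} Set $F(t)=t\prod_{\alpha\in S_+}(1-qt^{-\alpha})=\sum_{I\subset S_+}(-q)^{|I|}t^{1-\sigma_I}$. The numerator of $X_V$ is $F(t)-F(t^{-1})$. A monomial $t^{e}$ with $e=0$ is killed by the antisymmetrization. Otherwise $t^e-t^{-e}$ contributes $\operatorname{sgn}(e)\,\chi_{|e|-1}$ after dividing by $t-t^{-1}$. Hence
\[X_V=\sum_{I\subset S_+,\ \sigma_I\neq 1}(-q)^{|I|}\operatorname{sgn}(1-\sigma_I)\,\chi_{|1-\sigma_I|-1},\]
and $V$ is Hesselink-type exactly when the total coefficient of $\chi_k$ vanishes for every $k\geq 1$.

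\textbf{Key step: forcing $D\leq 2$.} Every $a_j>0$, so the exponent $1-\sigma_I$ is at most $1$, with equality only for $I=\varnothing$. The minimum exponent $1-D$ is attained only by $I=S_+$, because removing any element strictly raises the exponent. Suppose $D\geq 3$. Then $|1-D|-1=D-2\geq 1$. No positive exponent can reach $D-1\geq 2$, and $I=S_+$ is the only subset with exponent $1-D$. So the coefficient of $\chi_{D-2}$ is the single term $-(-q)^n\neq 0$, and $V$ is not Hesselink-type. Therefore $D\leq 2$.

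\textbf{Enumeration and converse.} Each $V_d$ with $d\geq 3$ already contributes at least $d\geq 3$ to $D$, so it is excluded. The remaining possibilities up to trivial summands are:
\begin{itemize}
\item $D=0$: only trivial summands;
\item $D=1$: $V=V_1$, the vertex with a framing;
\item $D=2$: $V=V_2$, the self-loop (the adjoint representation), or $V=V_1\oplus V_1$, the path of length $1$ with two framings on its sole vertex.
\end{itemize}
With a single vertex, leaves and $3$-hyperedges cannot occur. So these are exactly the extended quiver representations of trivial type for $\sl_2$, possibly plus copies of the trivial representation. For the converse direction, Lemma \ref{lem:trivtype} \ref{lem:trivtype1}. shows that these basic representations satisfy \eqref{eq:id}, and hence are Hesselink-type. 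Adding trivial summands does not change $X_V$.

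\textbf{Expected obstacle.} I do not expect a serious one. The only point needing care is that the extreme term $I=S_+$ cannot be cancelled. This relies on all elements of $S_+$ being strictly positive, which gives uniqueness of the minimizing subset, together with the asymmetry between the maximal positive exponent $1$ and the minimal exponent $1-D$.
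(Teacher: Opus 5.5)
Your proof is correct, and it takes a different route from the paper's.

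\textbf{The paper's route.} The paper argues coefficient by coefficient in $q$.
\begin{itemize}
\item The $q^1$-coefficient of $X_V$ receives $+q\chi_{a-2}$ from each positive weight $a\geq 3$. These all have the same sign, so they cannot cancel, and this rules out $V_d$ for $d\geq 3$.
\item Modulo trivial characters, the $q^2$-coefficient is $-m_1m_2\chi_1-\binom{m_2}{2}\chi_2$. This forces $m_1m_2=0$ and $m_2\leq 1$.
\item The top $q$-degree term of $X_V$ for $V=V_1^{\oplus m_1}$ then forces $m_1\leq 2$.
\end{itemize}

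\textbf{Your route.} You use only the term $I=S_+$. It is both the top $q$-degree term and the unique term of minimal $t$-exponent $1-D$. In rank one, every other exponent is either at most $1$ or strictly larger than $1-D$. So once $D\geq 3$, nothing can cancel the resulting $\chi_{D-2}$ contribution.

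\textbf{Comparison.} The paper's third step is exactly your extremal term, specialized to $V_1^{\oplus m_1}$. Your argument folds all three steps into one. It also yields a cleaner intrinsic criterion: an $\sl_2$-representation is Hesselink-type if and only if $\sum_{\alpha\in S_+}\alpha\leq 2$. The converse direction follows from the basic cases of the lemma on trivial-type representations, or directly from $X_{V_1}=1$, $X_{V_1\oplus V_1}=1-q^2$ and $X_{V_2}=1+q$. The paper's degree-by-degree bookkeeping is more hands-on: it shows which low-order coefficients of $X_V$ fail for each excluded representation. Your version isolates a single, uncancellable obstruction. That obstruction relies on the rank-one fact that the only other weight in the orbit of $\pm(D-1)$ is $+(D-1)$, which is unreachable.
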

\begin{proof}
    Let $V$ be a Hesselink-type representation of $\sl_2$, and write
    \[V=\bigoplus_{i\geq 0}V_i^{\oplus m_i}.\]
    For $\sl_2$, any positive weight $a$ in $V$ contributes the term $\chi_{V_{a-2}}$ to the coefficient of $q$ in $X_V$. Since $V$ is Hesselink-type, this must be zero for all $a>2$, so we conclude $m_i=0$ if $i>2$. Modulo copies of the trivial character, the coefficient of $q^2$ in $X_V$ is then equal to $-m_1m_2-{m_2\choose{2}}$. This must be zero since $V$ is Hesselink-type, so at least one of $m_1,m_2$ is zero, and if $m_2\neq 0$ then we must have $m_2=1$. If $m_1\neq 0$, then the top degree part of $X_V$ is $\chi_{V_{m_1-2}}$ if $m_1\geq 2$. We conclude that $m_1\leq 2$, so the only possible values of $m_1,m_2$ are
    \[(m_1,m_2)\in\{(0,0),(1,0),(2,0),(0,1)\}.\]
    These correspond to the empty vertex, vertex with one frame, vertex with two frames, and self-loop, respectively. So modulo trivial summands, $V$ must be an extended quiver representation of trivial type.
\end{proof}

\section{More on Hesselink-type representations}\label{sec:small}

\subsection{Geometric interpretation}

Let us interpret the virtual character $X_V$ geometrically. Note that by the Weyl character formula, $X_V$ is equal to a \emph{virtual} character of $\g$ (with coefficients in $\Z[q]$). Given any $\lambda\in\Lambda$, replacing the $\prod_{\alpha\in S_+}(1-qt^{-w\alpha})$ with $t^{w\lambda}$ in the left hand side yields
\[\frac{\sum_{w\in W}(-1)^{\ell(w)}t^{w\rho}t^{w\lambda}}{\sum_{w\in W}(-1)^{\ell(w)}t^{w\rho}},\]
which is exactly the character $\chi_\lambda$ when $\lambda$ is dominant by the Weyl character formula. To determine what happens when $\lambda$ is not dominant, consider the shifted $W$-action on $\lambda$ given by $w\cdot\lambda=w(\lambda+\rho)-\rho$. It is easy to check that for all $y\in W$, we have
\[\frac{\sum_{w\in W}(-1)^{\ell(w)}t^{w\rho}t^{w\lambda}}{\sum_{w\in W}(-1)^{\ell(w)}t^{w\rho}}=\frac{\sum_{w\in W}(-1)^{\ell(w)}t^{w\rho}(-1)^{\ell(y)}t^{w(y\cdot\lambda)}}{\sum_{w\in W}(-1)^{\ell(w)}t^{w\rho}}.\]
In particular, if $y\cdot\lambda$ is dominant then this is $(-1)^{\ell(y)}\chi_{y\cdot\lambda}$. When there is no such $y$, $\rho+\lambda$ is not regular (i.e.~lies on a hyperplane $\langle\alpha,\rho+\lambda\rangle=0$ for some $\alpha\in R_+$), and thus we have
\[\frac{\sum_{w\in W}(-1)^{\ell(w)}t^{w\rho}t^{w\lambda}}{\sum_{w\in W}(-1)^{\ell(w)}t^{w\rho}}=0\]
since the numerator is $W$-antiinvariant and consists of nonregular weights. 

Using this, we can reinterpret the original expression using Borel-Weil-Bott. Namely, let $B\subset G$ be the Borel corresponding to our choice of Cartan subalgebra and polarization, and let $T\subset B$ be its maximal torus. Let $V_+\subset V$ be the direct sum of all weight spaces with weights in $S_+$, which has a structure as a $T$-module and hence as a $B$-module via the quotient map $B\to T$. Note that as a $B$-module, the space $\wedge^\bullet V_+^*$ has $q$-character given exactly by $\prod_{\alpha\in S_+}(1-qt^{-\alpha})$, where the degree in $q$ corresponds to the natural grading on an exterior algebra. Thus by Borel-Weil-Bott, $X_V$ is the $q$-character of
\[R\Gamma(G\times^B(\wedge^\bullet V_+^*))^\vee,\]
where we view $G\times^B(\wedge^\bullet V_+^*)$ as a vector bundle on the flag variety $G/B$ and the $q$-grading is via the grading on the exterior algebra as before. Here, when we take the $q$-character of a derived $G$-module $X_\bullet$ we mean
\[\sum_{i\in\Z}(-1)^i\ch_qH^i(X_\bullet).\]
As the dual of the trivial module is still the trivial module, we obtain
\begin{lemma}
    $V$ is Hesselink-type if and only if $[R\Gamma(G\times^B(\wedge^\bullet V_+^*))]\in\Z[q][\C]$ in the Grothendieck group $K_0(D\mathrm{Rep}(G))[q]$.
\end{lemma}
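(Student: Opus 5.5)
The plan is to make the Borel--Weil--Bott discussion preceding the lemma into an identity in $K_0(D\mathrm{Rep}(G))[q]$. First I write the graded $B$-module $\wedge^\bullet V_+^*$ in $K_0(\mathrm{Rep}(B))[q]$. Because $B$ acts through $B\to T$, it splits as a direct sum of one-dimensional $B$-modules $\C_{\mu}$, one for each monomial in the expansion of $\prod_{\alpha\in S_+}(1-qt^{-\alpha})$. Here $\mu=-\sum_{\alpha\in I}\alpha$ for $I\subset S_+$, it sits in exterior degree $|I|$, and it carries the sign $(-1)^{|I|}$ from the alternating convention. Since $R\Gamma(G\times^B-)$ is additive on short exact sequences, taking $K_0$-classes with the $q$-grading gives
\[[R\Gamma(G\times^B\wedge^\bullet V_+^*)]=\sum_{I\subset S_+}(-q)^{|I|}\,[R\Gamma(G\times^B\C_{-\sum_{\alpha\in I}\alpha})].\]

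Next I apply Borel--Weil--Bott to each line bundle, with the convention (matching the dual appearing in the text) that $[R\Gamma(G\times^B\C_\mu)]^\vee$ has Euler characteristic
\[\frac{\sum_{w}(-1)^{\ell(w)}t^{w\rho}t^{w\lambda}}{\sum_w(-1)^{\ell(w)}t^{w\rho}}\]
for the appropriate $\lambda=\pm\mu$. The shifted-action computation in the text shows this equals $(-1)^{\ell(y)}\chi_{y\cdot\lambda}$ when $\lambda+\rho$ is regular and is $0$ otherwise, which is exactly the cohomological shift of Bott's theorem. Summing over $I$ and using linearity of the Weyl quotient, the character of the dual of the class above is precisely $X_V$, as the text already states.

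It remains to translate. Taking characters is injective on $K_0(D\mathrm{Rep}(G))[q]$, since the irreducible characters $\chi_\lambda$ form a basis and the Euler characteristic map is an isomorphism $K_0(D\mathrm{Rep}(G))\cong K_0(\mathrm{Rep}(G))$. Dualization is an involution on $K_0$ sending $\chi_\lambda$ to $\chi_{-w_0\lambda}$ and fixing the trivial class $[\C]$. Hence the class lies in $\Z[q][\C]$ if and only if its dual does, and that holds if and only if $X_V\in\Z[q]\cdot\chi_0$, which is the definition of Hesselink-type.

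The main obstacle is keeping the sign and dual conventions consistent: the dual in Borel--Weil--Bott, whether $G\times^B\C_\mu$ has sections of weight $\mu$ or $-\mu$, and the sign $(-1)^{|I|}$ from the exterior grading. I would fix these by checking the case $V=0$, where the class is $[\C]$ and $X_V=1$, and the case $\g=\sl_2$, $V=V_1$. Any mismatch there would at worst replace $X_V$ by its dual or twist by $q\mapsto -q$, and neither changes membership in $\Z[q][\C]$.
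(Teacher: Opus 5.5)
Your argument follows the paper's route. You split $\wedge^\bullet V_+^*$ into weight lines and apply Borel--Weil--Bott term by term, so that $X_V$ becomes the character of $R\Gamma(G\times^B\wedge^\bullet V_+^*)$ up to duality. You then conclude because characters detect classes in $K_0$ and $[\C]^\vee=[\C]$. The paper simply takes $X_V=\ch_q R\Gamma(\cdots)^\vee$ as the output of Borel--Weil--Bott in its conventions.

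\textbf{The gap.} What fails is your closing claim that a convention mismatch is harmless. The ambiguity you leave open, $\lambda=\pm\mu$ (equivalently, which Borel is meant, or which way $B$ acts on the fibre), is neither a duality nor a $q\mapsto -q$ twist. With the wrong sign, the dual class has character equal to the Weyl quotient of $\prod_{\alpha\in S_+}(1-qt^{\alpha})$. By contrast, $X_V^\vee$ is the Weyl quotient of $\prod_{\alpha\in S_+}(1-qt^{w_0\alpha})$, and the Weyl quotient is not $W$-invariant in its argument.

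\textbf{A concrete counterexample.} Take the adjoint representation of $\sl_2$, with $B$ upper triangular and the usual $G\times^B\C_\mu=(G\times\C_\mu)/B$. Then $G\times^B\mathfrak{n}_+^*$ is the tangent bundle $\mathcal{O}(2)$ of $\mathbb{P}^1$. So the class is $[\C]-q[\sl_2]\notin\Z[q][\C]$, even though $X_V=1+q$. For $V=V_1$ the same convention gives $[\C]-q[V_1]$, while $X_V=1$.

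\textbf{What the proof needs.} The statement is genuinely convention-dependent. The proof must fix a convention in which $R\Gamma(G\times^B\C_\mu)$ has character equal to the Weyl quotient of $t^{\mu}$ or its dual. One such choice is $\Lie B=\h\oplus\mathfrak{n}_-$ with the usual associated bundle; there $X_V$ is the character of $R\Gamma$ itself. For adjoint $\sl_2$, $G\times^B\mathfrak{n}_+^*$ is then the cotangent bundle, $R\Gamma=\C[-1]$, and the class is $(1+q)[\C]$.

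Your proposed check with $\sl_2$ and $V=V_1$ is the right way to detect which convention is in force, but it is a test, not a repair. In the wrong convention the check fails and the lemma is actually false there, rather than being off by an innocuous dual or sign. Once the convention is stated up front, the rest of your argument goes through verbatim.
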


\subsection{Counterexamples}\label{sec:counter}

As Hesselink-type representations are a new concept and rather mysterious, it is interesting to consider how they might relate to other classes of representations. The first obvious example is whether they have a relation with cofree representations, which is the content of Conjecture \ref{conj:cofree}. We provide some counterexamples to other potential relations one may ask about.

\begin{example}
    Let $\g=\sl_2$, and $V=V_3$ be the irreducible representation with highest weight 3. Then $V$ is cofree\footnote{see \cite{littelmann}, this is also indirectly proved in \cite{vinberg} since the unique outer grading on $\sl_3$ yields $\g_0\cong\sl_2,\g_1\cong V_3$.} and we have $\rho=\rho^\vee=1,W=S_2,S_+=\{1,3\}$. The numerator of $X_V$ becomes $-q^2t^3+qt^2+t-t^{-1}-qt^{-2}+q^2t^{-3}$, whereas if $V$ was Hesselink-type then $X_V$ would have to lie in $(t-t^{-1})\Z[q]$. So not all cofree representations are Hesselink-type.
\end{example}

\begin{example}\label{eg:g2}
    Let $\tilde{\g}$ be a Lie algebra of type $G_2$ with simple roots $\alpha_1,\alpha_2$ where $\alpha_1$ is the long root and $\alpha_2$ is the short root. For $i=0,1$, let $\g_i$ be the direct sum of all weight spaces of $\tilde{\g}$ with weight $\lambda$ such that $\langle\lambda,\alpha_1^\vee\rangle\equiv i\pmod 2$. Then $\g_0,\g_1$ define an inner grading on $\tilde{\g}$. Let $\g=\g_0,V=\g_1$. Then we have $\g\cong\sl_2\times\sl_2, V\cong V_1\boxtimes V_3$. We have $S_+=\{(1,3),(-1,3),(1,1)\}$, so in particular we have $X_V=1+q\chi_{V_1\boxtimes V_1}-q^2\chi_{V_0\boxtimes V_4}$, so $V$ is not Hesselink-type. So a representation arising from a graded Lie algebra with inner grading is not necessarily Hesselink-type.
\end{example}

\begin{example}\label{eg:3232}
    As mentioned in Remark \ref{rem:>2}, consider the representation $V$ of the cyclic quiver with 4 vertices that has dimension vector $3,2,3,2$. As pointed out in the remark, the numerator of $X_V$ contains terms whose weights do not lie on a reflection hyperplane and are also not in $W\rho$. This is not possible if $X_V$ is a direct sum of trivial characters, so $V$ is not Hesselink-type. So not all representations of cyclic quivers are Hesselink-type.
\end{example}

\printbibliography

\footnotesize{
{\bf V.K.}: Department of Mathematics Harvard University and CMSA, 1 Oxford St, Cambridge, MA 02138, USA\\
\hphantom{x}\quad\, {\tt vkrylov@math.harvard.edu}, {\tt krylovasya@gmail.com}

{\bf F.W.}: Department of Mathematics, MIT,
77 Massachusetts Ave, Cambridge, MA 02139, USA\\
\hphantom{x}\quad\, {\tt fw0@mit.edu}}

\end{document}